\newcommand{\be}{\begin{equation}}
\newcommand{\ee}{\end{equation}}
\newcommand{\bee}{\begin{equation*}}
\newcommand{\eee}{\end{equation*}}
\newcommand{\dive}{\mbox{div}}
\newtheorem{thm}{Theorem}[section]
\newtheorem{prop}{Proposition}[section]
\newtheorem{rmk}{Remark}[section]
\newtheorem{lem}{Lemma}[section]
\newtheorem{cor}{Corollary}[section]\label{key}
\begin{document}

\title{Asymptotic stability of homogeneous solutions of  incompressible stationary  Navier-Stokes equations}
\author{YanYan Li\footnote{Department of Mathematics, Rutgers University, 110 Frelinghuysen Road, Piscataway, NJ 08854, USA. Email: yyli@math.rutgers.edu,   is partially supported by NSF grant DMS-1501004.}, 
Xukai Yan\footnote{School of Mathematics, Georgia Institute of Technology, 686 Cherry St NW, Atlanta, GA 30313, USA. Email: xukai.yan@math.gatech.edu,  is partially supported by AMS-Simons Travel Grant and AWM-NSF Travel Grant 1642548.}}
\date{}

\maketitle

\abstract
   {It was proved by Karch and Pilarzyc that  Landau solutions  are asymptotically stable under any $L^2$-perturbation. 
   In our earlier work with L. Li, we have classified all  $(-1)$-homogeneous axisymmetric no-swirl solutions of incompressible stationary Navier-Stokes equations in three dimension which are smooth on the unit sphere minus the south and north poles. 
   In this paper, we study the asymptotic stability of the least singular solutions among these solutions other than Landau solutions, 
    and prove that such solutions are asymptotically stable under any $L^2$-perturbation.}

\section{Introduction}\label{sec_1}

Consider the incompressible stationary Navier-Stokes Equations in $\mathbb{R}^3$, 
\begin{equation}\label{NS}
\left\{
    \begin{split}
       & -\Delta u+(u\cdot \nabla) u+\nabla p=0, \\ 
       & \dive\textrm{ }  u=0.
   \end{split}
   \right.
\end{equation}

These equations are invariant under the scaling $u(x)\to \lambda u(\lambda x)$ and $p(x)\to \lambda^2 p(\lambda x)$, $\lambda>0$ and it is natural to study solutions which are invariant under this scaling. These solutions are refered as  $(-1)$-homogeneous solutions (although $p$ is $(-2)$-homogeneous). 


Let $x=(x_1,x_2,x_3)$ be Euclidian coordinates and $e_1=(1,0,0),e_2=(0,1,0),e_3=(0,0,1)$ be the corresponding unit normal vectors. Denote $x'=(x_1,x_2)$. Let $(r,\theta, \phi)$ be the spherical coordinates, where $r$ is the radial distance from the origin, $\theta$ is the angle between the radius vector and the positive $x_3$-axis, and $\phi$ is the meridian angle about the $x_3$-axis.  A vector field $u$ can be written as
\[
	u = u_r e_r + u_\theta e_{\theta} + u_\phi e_{\phi},
\]
where
\[
	e_r = \left(
	\begin{matrix}
		\sin\theta\cos\phi \\
		\sin\theta\sin\phi \\
		\cos\theta
	\end{matrix} \right),  \hspace{0.5cm}
	e_{\theta} = \left(
	\begin{matrix}
		\cos\theta\cos\phi  \\
		\cos\theta\sin\phi   \\
		-\sin\theta	
	\end{matrix} \right), \hspace{0.5cm}
	e_{\phi} = \left(
	\begin{matrix}
		-\sin\phi \\  \cos\phi \\ 0
	\end{matrix} \right).
\]
A vector field $u$ is called axisymmetric if $u_r$, $u_{\theta}$ and $u_{\phi}$ are independent of $\phi$, and is called {\it no-swirl} if $u_{\phi}=0$.

 In 1944, L.D. Landau discovered a 3-parameter family of explicit $(-1)$-homogeneous
solutions of the stationary NSE in $C^\infty(\mathbb{R}^3\setminus\{0\})$. 
These solutions, now called Landau solutions, are axisymmetric with no-swirl and have exactly one singularity at the origin.  
Tian and Xin proved in \cite{TianXin} that all $(-1)$-homogeneous, axisymmetric nonzero solutions 
of  (\ref{NS}) in $C^\infty(\mathbb{R}^3\setminus\{0\})$
are Landau solutions.  \v{S}ver\'{a}k proved in \cite{Sverak} that all (-1)-homogeneous nonzero solutions of (\ref{NS}) in $C^\infty(\mathbb{R}^3\setminus\{0\})$ are Landau solutions. 
There have also been works on   $(-1)$-homogeneous solutions of (\ref{NS}), see  \cite{G, PP1,PP2, PP3, Serrin, SL, SQ, W, Y}. 
In \cite{LLY1, LLY2, LLY3}, the $(-1)$-homogeneous axisymmetric solutions of (\ref{NS}) in $C^{\infty}(\mathbb{R}^3\setminus\{(x_1,x_2)=0\})$ with a possible singular ray $\{(x_1,x_2)=0\}$ was studied, 
 where such solutions with no-swirl  were classified in \cite{LLY1} and \cite{LLY2}, and  existence of such solutions with nonzero swirl was proved in \cite{LLY1} and \cite{LLY3}.

There has been much work in literature on the existence of weak solutions and $L^2$-decay of weak solutions of the evolutionary Navier-Stokes equations, see e.g.  \cite{ Cannone, Kaj, Kato, Lemari, Leray, Masuda, Schonbeck, Teman} and the references therein. Such $L^2$-decay of weak solutions can be viewed as the asymptotically stability of the zero stationary solution of (\ref{NS}). 
The asymptotic stability problem has been studied for other nonzero stationary solutions of (\ref{NS}) with some possible singularities in $\mathbb{R}^3$. Karch and Pilarzyc proved in \cite{Karch} that small Landau solutions are asymptotically stable under $L^2$-perturbations.  The $L^2$ asymptotic stability of other solutions with singularities are also studied in \cite{KPS}. 
With special $(-1)$-homogeneous solutions which are different from Landau solutions obtained in \cite{LLY1,LLY2, LLY3}, it is worth to explore the asymptotic stability or instability of these solutions. In this paper, we start this study for a family of solutions which are the simplest and least singular solutions among the solutions found in \cite{LLY1,LLY2, LLY3}. 



Denote $U= u\cdot r \sin\theta$ and $y=\cos\theta$. By the divergence free property of $u$ we have $u_{r}=\frac{1}{r}U_{\theta}'$. For (-1)-homogeneous axisymmetric no-swirl solutions, (\ref{NS}) can be reduced to
\begin{equation}\label{eqNS_1}
   (1-y^2)U'_{\theta}+2yU_{\theta}+\frac{1}{2}U^2_{\theta}=c_1(1-y)+c_2(1+y)+c_3(1-y^2).
\end{equation}
For $c_1\ge -1$ and $c_2\ge -1$, let 
\[
   \bar{c}_3 (c_1,c_2) := -\frac{1}{2} \left( \sqrt{1+c_1} + \sqrt{1+c_2}  \right) \left( \sqrt{1+c_1} + \sqrt{1+c_2}  + 2 \right),
   \]
   where $c_1, c_2, c_3$ are real numbers.  
    Denote $c=(c_1,c_2,c_3)$ and 
\[
   J:=\{c\in\mathbb{R}^3\mid c_1\ge -1, c_2\ge -1, c_3\ge \bar{c}_3(c_1,c_2)\}.
\]
In \cite{LLY2}, it was proved that there exist  $\gamma^-,\gamma^+\in C^0(J, \mathbb{R})$, satisfying $\gamma^-(c)<\gamma^+(c)$ if $c_3>\bar{c}_3(c_1,c_2)$, and $\gamma^-(c)=\gamma^+(c)$ if $c_3=\bar{c}_3(c_1,c_2)$, such that equation (\ref{eqNS_1}) has a unique solution $U_{\theta}^{c,\gamma}$ in $C^{\infty}(-1,1)\cap C^0[-1,1]$ satisfying $U_{\theta}^{c,\gamma}(0)=\gamma$ for every $c$ in $J$ and $\gamma^-(c)\le \gamma \le \gamma^+(c)$.  
In particular, $\gamma^+(0)>0$ and $\gamma^-(0)<0$. 
 Moreover, let 
\begin{equation}\label{eq_u_cgamma}
  \begin{split}
    & u^{c,\gamma}\equiv u^{c,\gamma}_re_r+u^{c,\gamma}_{\theta}e_{\theta}=(U^{c,\gamma}_{\theta})'e_r+\frac{U^{c,\gamma}_{\theta}}{\sin\theta}e_{\theta}, \\
    &  p^{c,\gamma}=\frac{1}{r^2}(u^{c,\gamma}_r-\frac{1}{2}(u^{c,\gamma}_{\theta})^2)=\frac{1}{r^2}((U^{c,\gamma}_{\theta})'-\frac{(U^{c,\gamma}_{\theta})^2}{2\sin^2\theta}).
    \end{split}
\end{equation}
 $\{(u^{c,\gamma}, p^{c,\gamma})\mid c\in J, \gamma^-(c)\le \gamma\le \gamma^+(c)\}$ are all $(-1)$-homogeneous axisymmetric no-swirl solutions of (\ref{NS}) in $C^{\infty}(\mathbb{R}^3\setminus\{(x_1,x_2)=0\})$.
%
%
%
It was also obtained in \cite{LLY2} that
\begin{equation*}
\begin{split}
		& U_\theta^{c,\gamma}(-1) = \left\{
		\begin{array}{ll}
			2+2\sqrt{1+c_1}, &  \mbox{when } \gamma=\gamma^+(c), \\
			2-2\sqrt{1+c_1}, &  \mbox{otherwise},
		\end{array}
		\right. \\
               & 		U_\theta^{c,\gamma}(1) = \left\{
		\begin{array}{ll}
			-2-2\sqrt{1+c_2}, &  \mbox{when } \gamma=\gamma^-(c), \\
			-2+2\sqrt{1+c_2}, &  \mbox{otherwise}.
		\end{array}
		\right.
		\end{split}
	\end{equation*}

As mentioned earlier, we would like to study the asymptotic stability or instability of the  $(-1)$-homogeneous axisymmetric stationary solutions found in \cite{LLY1, LLY2, LLY3}. 
Different from Landau solutions, these solutions 
  are singular at the north pole $N$ and/or south pole $S$. 
 These solutions $u$ satisfy either 
 $0<\displaystyle\limsup_{|x|=1,x'\to 0}|x||x'||\nabla u(x)|<\infty$ or $\displaystyle\limsup_{|x|=1,x'\to 0}|x'|^2|\nabla u(x)|>0$, while  Landau solutions  satisfy $\displaystyle \sup_{|x|=1}|x|^2|\nabla u|<\infty$. 
 In this paper, we study the stability of $(-1)$-homogeneous axisymmetric no-swirl solutions satisfying 
$0<\displaystyle\limsup_{x'\to 0}|x||x'||\nabla u(x)|<\infty$. These solutions are the family $\{(u^{c,\gamma}, p^{c,\gamma})\mid (c,\gamma)\in M\}$, where 
\begin{equation}\label{eqS_M}
  M:=\{(c,\gamma)\mid c_1=c_2=0, c_3>-4, \gamma^-(c)<\gamma<\gamma^+(c)\}.
\end{equation}

For any $(c,\gamma)\in M$, $U_{\theta}^{c,\gamma}$ satisfies 
\begin{equation}\label{eqS_1}
\left\{
    \begin{split}
    & (1-y^2)(U_{\theta}^{c,\gamma})'+2yU_{\theta}^{c,\gamma}+\frac{1}{2}(U_{\theta}^{c,\gamma})^2=c_3(1-y^2), -1<y<1, \\
    &U_{\theta}(0)=\gamma.
   \end{split}
   \right.
\end{equation}



\begin{prop}\label{prop_force}
   Let $(c,\gamma)\in M$, then $(u^{c,\gamma}(x), p^{c,\gamma}(x))$ satisfies 
   \begin{equation}\label{eqF_1}
   \left\{
     \begin{split}
       & -\Delta u^{c,\gamma}+u^{c,\gamma}\cdot\nabla u^{c,\gamma}+\nabla p^{c,\gamma}=(4\pi c_3 \ln |x_3| \partial_{x_3}\delta_{(0,0,x_3)}-b^{c,\gamma}\delta_{0})e_3, \quad x\in \mathbb{R}^3, \\
       & \dive\textrm{ }u^{c,\gamma}=0, \quad x\in \mathbb{R}^3,
      \end{split}
      \right.
   \end{equation}
   where 
   \begin{equation}\label{eqA_b}
       b^{c,\gamma}=\int_{-1}^{1}\left(y|U_{\theta}'|^2-\frac{2-y^2}{1-y^2}U_{\theta}-\frac{y}{1-y^2}U_{\theta}^2 \right)dy.
         \end{equation} 
         Equation (\ref{eqF_1}) and (\ref{eqA_b}) are understood in the following distribution sense: for any $\varphi\in C_c^{\infty}(\mathbb{R}^3)$, $j=1,2,3$, 
         \begin{equation}\label{eqF_2}
           \int_{\mathbb{R}^3} (\nabla u_j \nabla \varphi-u_iu_j\partial_{x_i}\varphi-p\partial_{x_j}\varphi)= [4\pi c_3\int_{-\infty}^{\infty}\ln|x_3|\partial_{x_3}\varphi(0,0,x_3)dx_3-b^{c,\gamma}\varphi(0)]\delta_{j3}e_3, 
         \end{equation}
         and 
         \begin{equation}\label{eqF_3}
            \int_{\mathbb{R}^3}u^{c,\gamma}\cdot\nabla \varphi=0.
         \end{equation}
\end{prop}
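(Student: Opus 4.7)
The plan is to verify the distributional equations (\ref{eqF_2}) and (\ref{eqF_3}) by integrating by parts on $D_\epsilon := \{|x'|>\epsilon\}$, where $(u^{c,\gamma}, p^{c,\gamma})$ is smooth and satisfies (\ref{NS}) classically, and extracting the distributional force as the $\epsilon\to 0$ limit of boundary integrals on the cylinder $\Sigma_\epsilon = \{|x'|=\epsilon\}$. A preliminary check is that $u^{c,\gamma}$, $\nabla u^{c,\gamma}$, $p^{c,\gamma}$ and $u^{c,\gamma}\otimes u^{c,\gamma}$ all lie in $L^1_{\mathrm{loc}}(\mathbb{R}^3)$, which follows from the near-axis behavior $|\nabla u^{c,\gamma}| = O(\ln|x'|/|x|^2)$ and the local integrability of $1/|x'|$ and $\ln|x'|$. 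Using $\dive u = 0$ to rewrite $u\cdot\nabla u$ and integrating by parts in the smooth region, the bulk integrand vanishes by the classical NSE, leaving
\[
  \int_{\mathbb{R}^3}\bigl(\nabla u_j\nabla\varphi - u_iu_j\partial_i\varphi - p\partial_j\varphi\bigr)dx = \lim_{\epsilon\to 0}\int_{\Sigma_\epsilon}\bigl[-\partial_\rho u_j + u_\rho u_j + p\hat\rho_j\bigr]\varphi\, dS,
\]
with the analogous reduction $\int u\cdot\nabla\varphi\, dx = -\lim_\epsilon\int_{\Sigma_\epsilon}u_\rho\varphi\, dS$ for (\ref{eqF_3}).

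For $j = 1, 2$, the no-swirl axisymmetry gives $u_1 = u_\rho\cos\phi$, $u_2 = u_\rho\sin\phi$, $\hat\rho_1 = \cos\phi$, $\hat\rho_2 = \sin\phi$. Expanding $\varphi(\epsilon\cos\phi,\epsilon\sin\phi,x_3)$ in powers of $\epsilon$ and averaging over $\phi \in [0, 2\pi]$, the leading contributions vanish since $\int_0^{2\pi}\cos\phi\, d\phi = \int_0^{2\pi}\sin\phi\, d\phi = 0$, and the higher-order corrections die in the limit. For (\ref{eqF_3}), using the $(-1)$-homogeneity of $u$ and the substitutions $\xi = x_3/\epsilon$ followed by $y = \xi/\sqrt{1+\xi^2}$, the boundary integral reduces to $-2\pi\varphi(0)\int_{-1}^{1}\frac{d}{dy}\bigl(U_\theta/\sqrt{1-y^2}\bigr)dy = 0$, since $U_\theta(\pm 1) = 0$ under $c_1 = c_2 = 0$ with $\gamma \in (\gamma^-(c), \gamma^+(c))$.

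The heart of the argument is the case $j = 3$, where $\hat\rho_3 = 0$ so the boundary integrand is $[-\partial_\rho u_3 + u_\rho u_3]\varphi$. Axisymmetry gives $I^{(3)}_\epsilon = 2\pi\int F_\epsilon(x_3)\varphi(0,0,x_3)\, dx_3 + o(1)$ where $F_\epsilon(x_3) = \epsilon[-\partial_\rho u_3 + u_\rho u_3](\epsilon, x_3)$. By $(-1)$-homogeneity of $u$, one has $F_\epsilon(x_3) = \epsilon^{-1}g(x_3/\epsilon)$ with $g(\xi) = [-\partial_\rho u_3 + u_\rho u_3](1, \xi)$ an explicit rational function of $U_\theta, U_\theta', U_\theta''$ evaluated at $y(\xi) = \xi/\sqrt{1+\xi^2}$; hence $\int F_\epsilon\varphi(0,0,x_3)\, dx_3 = \int g(\xi)\varphi(0,0,\epsilon\xi)\, d\xi$. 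Matching coefficients in (\ref{eqS_1}) subject to $U_\theta(\pm 1) = 0$ yields the expansions $U_\theta(y) = A^+(1-y) - c_3(1-y)\ln(1-y) + O((1-y)^2\ln^2(1-y))$ as $y\to 1$ and $U_\theta(y) = A^-(1+y) + c_3(1+y)\ln(1+y) + O((1+y)^2\ln^2(1+y))$ as $y\to -1$; in particular $U_\theta''(y) \sim \mp c_3/(1\mp y)$ at the poles, giving $g(\xi) = -2c_3/\xi + O(|\xi|^{-3}\ln^2|\xi|)$ as $|\xi| \to \infty$. Decomposing $g = g_0 - (2c_3/\xi)\mathbf{1}_{|\xi|>1}$ with $g_0 \in L^1(\mathbb{R})$, the first summand contributes $\varphi(0)\int g_0$ by dominated convergence, while the second, after the change of variables $x_3 = \epsilon\xi$, tends to $\mathrm{p.v.}\int(-2c_3/x_3)\varphi(0,0,x_3)\, dx_3 = 2c_3\int\ln|x_3|\partial_{x_3}\varphi(0,0,x_3)\, dx_3$ (using $(\ln|x_3|)' = \mathrm{p.v.}(1/x_3)$ and integration by parts). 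The overall factor $2\pi$ from the $\phi$-average then produces the $4\pi c_3\int\ln|x_3|\partial_{x_3}\varphi$ term of (\ref{eqF_2}).

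The remaining $\varphi(0)$-coefficient must equal $-b^{c,\gamma}$, and is identified as $2\pi \int g_0 = 2\pi\lim_{R\to\infty}\int_{-R}^R g(\xi)\, d\xi$. Changing variables $\xi\to y$, the integrand in $y$ splits into a divergence $(y^2 U_\theta' - yU_\theta)'$, whose boundary evaluations at $y = \pm y_R$ produce logarithmically divergent pieces proportional to $\ln R$ that exactly cancel between the two endpoints to leave a finite limit depending on $A^\pm$, plus the integrable remainder $y(U_\theta')^2 + (2y^2-1)U_\theta U_\theta'/(1-y^2) - yU_\theta^2/(1-y^2)$. Using the ODE (\ref{eqS_1}) in the form $(1-y^2)U_\theta' = c_3(1-y^2) - 2yU_\theta - U_\theta^2/2$ to eliminate $U_\theta'$ where convenient, together with an integration by parts exploiting $\tfrac12\, d(U_\theta^2)/dy = U_\theta U_\theta'$ and the identity $\tfrac{d}{dy}\bigl[(2y^2-1)/(1-y^2)\bigr] = 2y/(1-y^2)^2$, one rewrites this constant as the integral in (\ref{eqA_b}). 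This algebraic reduction — combining the endpoint constants $A^\pm$, the boundary logarithms, and successive applications of the ODE and integration by parts to reach the specific form $y|U_\theta'|^2 - (2-y^2)/(1-y^2)U_\theta - y/(1-y^2)U_\theta^2$ — is the main technical obstacle of the proof.
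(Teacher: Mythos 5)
Your proposal follows essentially the same route as the paper: integrate by parts outside a shrinking cylinder $\{|x'|>\epsilon\}$, kill the $j=1,2$ boundary terms by averaging $\cos\phi$, $\sin\phi$ over the circle, extract the non-integrable $-2c_3/x_3$ tail of the $j=3$ boundary density to produce the $4\pi c_3\int\ln|x_3|\,\partial_{x_3}\varphi$ term, and identify the remaining integrable part with $-b^{c,\gamma}\varphi(0)$; your asymptotics for $U_\theta$, $U_\theta'$, $U_\theta''$ near $y=\pm1$ and the resulting decay $g(\xi)=-2c_3/\xi+O(|\xi|^{-3}\ln^2|\xi|)$ all match the paper's Lemma \ref{lemS_0}. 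The one step you leave unexecuted is the algebraic identification of the $\varphi(0)$-coefficient with the specific integral (\ref{eqA_b}), which you correctly flag as the main technical burden; here the paper's arrangement is worth noting: it substitutes $U_\theta''=-\frac{1}{1-y^2}(2U_\theta+U_\theta U_\theta'+2c_3 y)$ from the differentiated ODE \emph{before} changing variables $x_3\mapsto y$, so that the leftover density $G$ is already integrable on $(-1,1)$ and the only integration by parts needed is of $2yU_\theta'$, whose boundary terms vanish since $U_\theta(\pm1)=0$ — no cancellation of divergent $\ln R$ boundary evaluations is ever required. Your plan (keeping $U_\theta''$ and organizing the $y$-integrand into a divergence plus remainder) should also close, but it is strictly more work and is the only part of the proposal not actually verified.
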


We now study the stability of the family of solutions $\{u^{c,\gamma}\mid (c,\gamma)\in M\}$. 
Let $\dot{H}^1(\mathbb{R}^3)$ denote the closure of $C^{\infty}_c(\mathbb{R}^3, \mathbb{R}^3)$ under the norm $\|\nabla u\|_{L^2(\mathbb{R}^3)}$, and for $1\le p <\infty$, 
\[
   L^p_{\sigma}(\mathbb{R}^3)=\{u\in L^p(\mathbb{R}^3) \mid \dive \textrm{ }u=0\}, \quad 
   \dot{H}^1_{\sigma}(\mathbb{R}^3)=\{u\in \dot{H}^1(\mathbb{R}^3)\mid \dive \textrm{ } u=0\},
\]
and
\[
   \|u\|_{L^p_{\sigma}(\mathbb{R}^3)}:= \|u\|_{L^p(\mathbb{R}^3)}, \quad \|u\|_{\dot{H}^1_{\sigma}(\mathbb{R}^3)}=\|\nabla u\|_{L^2(\mathbb{R}^3)}. 
\]

For a given solution $(u^{c,\gamma},p^{c,\gamma})$ of  (\ref{NS}), let $u=u(x,t)$ denote a solution of 
\begin{equation}\label{NSE}
   \left\{
    \begin{split}
       & u_t-\Delta u+(u\cdot \nabla) u+\nabla p=(4\pi c_3 \ln |x_3| \partial_{x_3}\delta_{(0,0,x_3)}-b^{c,\gamma}\delta_{0})e_3, \textrm{ } (x,t)\in \mathbb{R}^3 \times (0,\infty), \\
       & \dive\textrm{ } u=0, \textrm{ } (x,t)\in \mathbb{R}^3 \times (0,\infty),\\
       & u(x,0)=u^{c,\gamma}+w_0,
   \end{split}
   \right.
\end{equation}
where $w_0\in L^2_{\sigma}(\mathbb{R}^3)$ and $b^{c,\gamma}$ is given by (\ref{eqA_b}). Then $w(x,t)=u(x,t)-u^{c,\gamma}$ and $\pi(x)=p(x)-p^{c,\gamma}(x)$ satisfy the initial value problem 

\begin{equation}\label{eqE_1}
   \left\{
    \begin{split}
       & w_t-\Delta w+(w\cdot \nabla) w+(w\cdot \nabla)u^{c,\gamma}+(u^{c,\gamma}\cdot \nabla) w+\nabla \pi=0, \textrm{ } (x,t)\in \mathbb{R}^3 \times (0,\infty),\\ 
       & \dive\textrm{ }  w=0,\textrm{ } (x,t)\in \mathbb{R}^3 \times (0,\infty),\\
       & w(x,0)=w_0(x).
   \end{split}
   \right.
\end{equation}
We study the existence and asymptotic behavior of global-in-time weak solutions of (\ref{eqE_1}). Let the energy space
  \begin{equation*}
  X:=L^{\infty}([0, \infty), L^2_{\sigma}(\mathbb{R}^3))\cap L^2([0,\infty), \dot{H}^1_{\sigma}),
\end{equation*}
and for $w$ in $X$
\begin{equation*}
   \|w\|_{X}:=\|w\|_{L^{\infty}([0,\infty), L^2_{\sigma}(\mathbb{R}^3))}+\|w\|_{L^2([0,\infty), \dot{H}^1_{\sigma})}. 
\end{equation*}
Let $(\cdot, \cdot)$ denote the $L^2$-inner product, i.e. $(f,g)=\int_{\mathbb{R}^3}fgdx$. 
A vector  $w\in X$ is a weak solution of (\ref{eqE_1}) if  for any $0\le s\le t<\infty$ and $\varphi\in C([0,\infty), H^1_{\sigma}(\mathbb{R}^3)\cap C^1([0,\infty), L^2_{\sigma}(\mathbb{R}^3) )$, 
\begin{equation*}
  \begin{split}
   & (w(t), \varphi(t))+\int_{s}^{t}[(\nabla w, \nabla \varphi)+(w\cdot \nabla w, \varphi)+(w\cdot \nabla u^{c,\gamma}, \varphi)+(u^{c,\gamma}\cdot \nabla w, \varphi)]d\tau\\
   & =(w(s), \varphi(s))+\int_{s}^{t}(w,\varphi_{\tau})d\tau.
   \end{split}
\end{equation*}


\begin{thm}\label{thmE}
   There exists some $\mu_0>0$, such that for any $c=(0,0,c_3)$,  
    $|(c,\gamma)|<\mu_0$, $w_0\in L^2_{\sigma}(\mathbb{R}^3)$, there exists a weak solution  $w$ of  (\ref{eqE_1})  in the energy space $X$. 
Moreover, $w$ is weakly continuous from $[0,\infty)$ to $L^2_{\sigma}(\mathbb{R}^3)$, and satisfies that for all $0\le s\le t<\infty$,  
 \begin{equation}\label{eqE_4}
     \|w(t)\|_2^2+\int_{s}^{t}\|\nabla \otimes w(\tau)\|_2^2 d\tau\le \|w(s)\|_2^2. 
   \end{equation}
\end{thm}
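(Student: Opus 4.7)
The plan is to construct $w$ by a Galerkin approximation, using a parameter-smallness absorption argument to control the linear ``drift'' term coming from the singular background $u^{c,\gamma}$. The first step is to obtain the basic a priori estimate. Testing \eqref{eqE_1} formally against $w$ and using $\dive w=\dive u^{c,\gamma}=0$ annihilates $((w\cdot\nabla)w,w)$ and $((u^{c,\gamma}\cdot\nabla)w,w)$, leaving
\begin{equation*}
   \frac{1}{2}\frac{d}{dt}\|w\|_2^2+\|\nabla w\|_2^2=-((w\cdot\nabla)u^{c,\gamma},w).
\end{equation*}
Since $u^{c,\gamma}$ is $(-1)$-homogeneous and, for $(c,\gamma)\in M$ small, satisfies $|\nabla u^{c,\gamma}(x)|\le C|(c,\gamma)|/(|x|\,|x'|)$ (this is precisely the class singled out by $\limsup_{x'\to 0}|x||x'||\nabla u^{c,\gamma}|<\infty$), the right-hand side is controlled by $C|(c,\gamma)|\int |w|^2/(|x|\,|x'|)\,dx$. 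Combining the standard $3$D Hardy inequality $\int |w|^2/|x|^2\le 4\|\nabla w\|_2^2$ with the slicewise $2$D Hardy inequality $\int |w|^2/|x'|^2\le C\|\nabla w\|_2^2$ through Cauchy-Schwarz yields
\begin{equation*}
   \int_{\mathbb{R}^3}\frac{|w|^2}{|x|\,|x'|}\,dx\le C\|\nabla w\|_2^2.
\end{equation*}
Choosing $\mu_0$ so small that $C\cdot C|(c,\gamma)|<\tfrac{1}{2}$ one gets $\tfrac{d}{dt}\|w\|_2^2+\|\nabla w\|_2^2\le 0$, which both delivers the candidate bound in $X$ and, after integration between $s$ and $t$, the target energy inequality \eqref{eqE_4}.

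With this a priori estimate, I would set up a Galerkin scheme in the standard way: pick a basis $\{\varphi_k\}\subset C^\infty_c(\mathbb{R}^3;\mathbb{R}^3)\cap L^2_\sigma$ that is dense in $\dot H^1_\sigma$, look for $w_N(x,t)=\sum_{k=1}^N a_k^N(t)\varphi_k(x)$ solving the finite-dimensional projection of \eqref{eqE_1} with initial data $P_N w_0$, and verify local solvability of the resulting ODE system by standard Picard/Carath\'eodory. The exact same computation as above applied to $w_N$ (the Galerkin projection commutes with the test $w_N$ itself) gives the same inequality, hence global existence with $\|w_N\|_X\le \|w_0\|_2$ uniformly in $N$.

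Then I would pass to the limit. From the uniform $X$-bound and the equation, $\partial_t w_N$ is bounded in, e.g., $L^{4/3}_{\rm loc}([0,\infty);H^{-1}(B_R))$ for every $R$; the only nontrivial contribution to that bound is the term $(w_N\cdot\nabla) u^{c,\gamma}$, which is handled by the same Hardy pairing. Aubin-Lions then yields a subsequence (still denoted $w_N$) converging weakly-$*$ in $L^\infty_t L^2_x$, weakly in $L^2_t\dot H^1_x$, and strongly in $L^2_t L^2_{\rm loc}$. Passing to the limit in the weak formulation is straightforward for the quadratic term $(w\cdot\nabla)w$ by the usual strong-weak pairing. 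For $(u^{c,\gamma}\cdot\nabla)w$ I would integrate by parts once so only $u^{c,\gamma}$ multiplies a product of $w_N$ and $\nabla\varphi$, after which the Hardy bound $\int|u^{c,\gamma}|^2 |\varphi|^2\le C|(c,\gamma)|^2\int |\varphi|^2/|x'|^2$ for $\varphi\in C^1_c$ lets strong $L^2_{\rm loc}$ convergence suffice. The term $(w_N\cdot\nabla u^{c,\gamma},\varphi)$ is handled analogously via $\int|\nabla u^{c,\gamma}|\,|w_N-w|\,|\varphi|$. Weak continuity of $w$ into $L^2_\sigma$ then follows from the standard argument (test functions are dense in $L^2_\sigma$ and $\|w(t)\|_2$ is bounded), and \eqref{eqE_4} comes from lower semicontinuity of $\|\nabla w\|_2^2$ in the limiting energy identity for $w_N$, with the indefinite term $-((w\cdot\nabla)u^{c,\gamma},w)$ absorbed by the Hardy estimate.

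The main obstacle, and the real novelty relative to the Karch-Pilarczyk treatment of Landau solutions, is precisely the control of $((w\cdot\nabla)u^{c,\gamma},w)$ and the analogous terms needed for the limit passage: whereas Landau solutions are smooth off the origin with $|\nabla u|\lesssim 1/|x|^2$, the present background $u^{c,\gamma}$ is singular along the entire $x_3$-axis with $|\nabla u^{c,\gamma}|\sim 1/(|x|\,|x'|)$. Making the mixed Hardy inequality above (and the smallness $C|(c,\gamma)|\ll 1$) do the job, and thus identifying the correct threshold $\mu_0$, is the technical heart of the argument; everything else is the Leray-Hopf framework adapted to this perturbed equation.
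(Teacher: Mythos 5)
Your overall framework (Galerkin approximation, energy estimate with the drift term absorbed by smallness of $|(c,\gamma)|$, Aubin--Lions compactness, lower semicontinuity for \eqref{eqE_4}) is exactly the Leray--Hopf/Karch--Pilarczyk scheme that the paper also invokes; the paper in fact does not reprove these steps but reduces Theorem \ref{thmE} to the single estimate \eqref{eqEst_1}. The problem is that your derivation of the key inequality $\int_{\mathbb{R}^3}|w|^2/(|x|\,|x'|)\,dx\le C\|\nabla w\|_2^2$ is broken. You obtain it by Cauchy--Schwarz from the $3$D Hardy inequality and a ``slicewise $2$D Hardy inequality'' $\int_{\mathbb{R}^3}|w|^2/|x'|^2\,dx\le C\|\nabla w\|_2^2$. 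That second inequality is false: slicing in $x_3$ reduces it to the planar Hardy inequality $\int_{\mathbb{R}^2}|v|^2/|x'|^2\,dx'\le C\int_{\mathbb{R}^2}|\nabla' v|^2\,dx'$, which fails for $p=2=n$ (take $v\equiv 1$ near the origin; the left side diverges logarithmically). Equivalently, the Hardy inequality with respect to the distance to a codimension-two set has no $L^2$ version. This is precisely the obstruction the paper flags when it says that \eqref{eqEst_2} ``cannot be proved by the classical Hardy's inequality,'' and it is also reflected in the sharpness statement of Theorem \ref{thmH}: the admissibility condition $\alpha p>1-n$ excludes $\alpha=-1$, $p=2$, $n=3$.

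The inequality you need is nevertheless true, but only because of the extra factor $|x|^{-1}$: the correct statement is \eqref{eqH_1_1} with $p=2$, $n=3$, $\alpha=\alpha'=\beta=-\tfrac12$, giving $\int|w|^2/(|x||x'|)\le C\int (|x'|/|x|)|\nabla w|^2\le C\|\nabla w\|_2^2$. The paper proves this by working in spherical coordinates, applying a one-dimensional Hardy inequality in the radial variable for each fixed angle, and then a separate one-dimensional Hardy-type argument in the polar angle $\theta_1$ near the poles with the weight $|\sin\theta_1|^{\alpha p+n-2}$ (which is integrable exactly when $\alpha p>1-n$). So the a priori estimate, the $H^{-1}$ bound on $\partial_t w_N$, and the limit passage in your proposal all go through, but only after you replace your Cauchy--Schwarz step by the anisotropic Hardy inequality of Theorem \ref{thmH}; as written, the central estimate is unproved and your suggested route to it cannot be repaired.
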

Recall that $\gamma^+(0)>0$ and $\gamma^-(0)<0$. So there is some $\mu_0'$, such that $\{(c,\gamma)\mid c_1=c_2=0, |(c_3,\gamma)|\le \mu_0'\}\subset M$. 
We also have
\begin{thm}\label{thmW}
There exists some $\mu_0>0$, such that for any $c=(0,0,c_3)$, $|(c,\gamma)|<\mu_0$ and weak solution $w\in X$ of (\ref{eqE_1}) satisfying (\ref{eqE_4}), 
   \[
      \lim_{t\to \infty}\|w(t)\|_2=0.
   \]
   Moreover, if $w_0\in L^p(\mathbb{R}^3)\cap L^2_{\sigma}(\mathbb{R}^3)$ for some $\frac{6}{5}<p<2$, then there exists some constant $C>0$, depending only on $(c,\gamma)$, $n, p$ and $\|w_0\|_{p}$, such that $\|w(t)\|_2\le Ct^{-\frac{3}{2}(\frac{1}{p}-\frac{1}{2})}$, for all $t>0$.
\end{thm}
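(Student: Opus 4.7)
The plan is to adapt the Fourier-splitting strategy of Schonbek, used by Karch and Pilarczyk \cite{Karch} for Landau solutions, to the present setting where $u^{c,\gamma}$ carries additional line singularities along the $x_3$-axis.

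The first step is to verify that the energy inequality (\ref{eqE_4}) indeed absorbs the full linearization. Since $u^{c,\gamma}$ is $(-1)$-homogeneous and its profile $U_{\theta}^{c,\gamma}$ tends to zero uniformly on compact subsets of $(-1,1)$ as $(c,\gamma)\to 0$, one checks that $u^{c,\gamma}\in L^{3,\infty}(\mathbb{R}^3)$ with norm tending to zero. Testing (\ref{eqE_1}) against $w$, using $\dive w=\dive u^{c,\gamma}=0$ to kill the inviscid trilinear terms, and bounding $|(w\cdot\nabla u^{c,\gamma},w)|=|(w\otimes w,\nabla u^{c,\gamma})|$ via H\"older's inequality in Lorentz spaces together with the Sobolev embedding $\dot H^1_\sigma\hookrightarrow L^{6,2}$, one obtains
$$\tfrac{1}{2}\tfrac{d}{dt}\|w\|_2^2 + \bigl(1 - C\|u^{c,\gamma}\|_{L^{3,\infty}}\bigr)\|\nabla w\|_2^2 \le 0,$$
which for $(c,\gamma)$ small is consistent with and strengthens (\ref{eqE_4}).

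For the asymptotic decay, I apply Schonbek's Fourier-splitting: from $\|\nabla w\|_2^2 \ge g(t)\|w\|_2^2 - g(t)\int_{|\xi|^2\le g(t)}|\hat w|^2 d\xi$, choosing $g(t)=\beta/(1+t)$ with $\beta$ sufficiently large and multiplying by $(1+t)^\beta$ give
$$(1+t)^\beta\|w(t)\|_2^2 \le \|w_0\|_2^2 + C\int_0^t (1+s)^{\beta-1}\int_{|\xi|^2\le \beta/(1+s)} |\hat w(\xi,s)|^2 d\xi\,ds.$$
To estimate $\hat w$ at low frequencies I use the mild formulation
$$\hat w(\xi,t) = e^{-t|\xi|^2}\hat w_0(\xi) - \int_0^t e^{-(t-s)|\xi|^2}\,i\xi\cdot\widehat{P[w\otimes w + u^{c,\gamma}\otimes w + w\otimes u^{c,\gamma}]}(\xi,s)\,ds.$$
For the linear part, $\int_{|\xi|\le \rho}|\hat w_0|^2 d\xi\to 0$ as $\rho\to 0$ by dominated convergence, which is enough to derive $\|w(t)\|_2\to 0$; when $w_0\in L^p\cap L^2_\sigma$ with $6/5<p<2$, Hausdorff--Young together with H\"older upgrade this to $\int_{|\xi|\le\rho}|\hat w_0|^2 d\xi\le C\rho^{3(2/p-1)}\|w_0\|_p^2$. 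The quadratic contribution is estimated by $\|w\otimes w\|_1\le\|w\|_2^2\le\|w_0\|_2^2$, and the cross term by the Lorentz--H\"older bound $\|u^{c,\gamma}\otimes w\|_{L^{6/5,2}}\le C\|u^{c,\gamma}\|_{L^{3,\infty}}\|w\|_2$, followed by Hausdorff--Young in Lorentz spaces to pass to $\int_{|\xi|\le\rho}|\widehat{u^{c,\gamma}\otimes w}|^2 d\xi$. A Gronwall/bootstrap argument on the resulting integro-differential inequality for $\|w(t)\|_2$ then yields the sharp rate $\|w(t)\|_2\le Ct^{-(3/2)(1/p-1/2)}$.

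The principal obstacle is the low-frequency analysis of $\widehat{u^{c,\gamma}\otimes w}$: because $u^{c,\gamma}\notin L^1\cup L^2$ owing to its axial line singularities and its slow decay at infinity, the standard Kato-type $L^1$-estimates used for the nonlinear Duhamel term in the Landau case do not apply directly, and the argument must be phrased in Lorentz spaces throughout. The smallness of $\|u^{c,\gamma}\|_{L^{3,\infty}}$ for small $(c,\gamma)$ is what allows the bootstrap to close and turns the coercivity of Step 1 into genuine algebraic decay.
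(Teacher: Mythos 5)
Your route to the decay is workable in outline, but it is genuinely different from the paper's at the one point where the paper actually does work. The paper does not reprove the Fourier-splitting machinery at all: it reduces both Theorems~\ref{thmE} and~\ref{thmW} to the single bilinear estimate (\ref{eqEst_1}) with small constant $K$, and then invokes \cite{Karch} verbatim. Its entire contribution to the proof is therefore (i) the pointwise asymptotics $|\nabla u^{c,\gamma}|\le C(|c|+|\gamma|)/(|x||x'|)$ from Lemma~\ref{lemS_0} and Corollary~\ref{corS_2}, and (ii) the new anisotropic Hardy inequality of Theorem~\ref{thmH}, whose case $p=2$, $\alpha=\alpha'=\beta=-\tfrac12$ gives $\int_{\mathbb{R}^3}|v|^2/(|x||x'|)\,dx\le C\|\nabla v\|_{L^2}^2$ and hence (\ref{eqEst_1}). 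You replace (ii) by Lorentz-space duality: since $1/(|x||x'|)$ is $(-2)$-homogeneous with angular profile $(\sin\theta_1)^{-1}\in L^{3/2}(S^2)$, one has $\nabla u^{c,\gamma}\in L^{3/2,\infty}$ (equivalently $u^{c,\gamma}\in L^{3,\infty}$) with quasi-norm $O(|c|+|\gamma|)$, and O'Neil's inequality together with $\dot H^1\hookrightarrow L^{6,2}$ yields (\ref{eqEst_1}) directly. That is a legitimate, softer alternative; what the paper's route buys is a self-contained real-variable inequality of independent interest (including the sharpness statement $\alpha'\le\alpha$), while yours buys compatibility with the \cite{KPS} framework and shorter proofs. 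Your Fourier-splitting Steps 2--3 then essentially reproduce \cite{Karch}/\cite{KPS}, which is exactly what the paper cites, so there is nothing to compare there beyond noting that for an arbitrary weak solution satisfying only (\ref{eqE_4}) the Duhamel representation of $\hat w$ must be justified through the weak formulation (as in \cite{Karch}), not assumed.

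Two cautions. First, your justification of the key smallness claim is insufficient as written: that $U_\theta^{c,\gamma}\to 0$ uniformly on compact subsets of $(-1,1)$ says nothing about the poles $y=\pm1$, which is precisely where $u^{c,\gamma}$ and $\nabla u^{c,\gamma}$ are singular and where membership in $L^{3,\infty}$ and $L^{3/2,\infty}$ could fail. You must invoke the quantitative pole asymptotics of Lemma~\ref{lemS_0} (the $\ln(1-y^2)$ behavior with coefficient $O(|c|+|\gamma|)$) to conclude that the angular profiles lie in $L^3(S^2)$ and $L^{3/2}(S^2)$ with norms $O(|c|+|\gamma|)$; with those estimates in hand the claim is correct. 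Second, the paper explicitly remarks that the solutions $u^{c,\gamma}$ do \emph{not} belong to the function classes listed in \cite{KPS} for which (\ref{eqEst_1}) is known; since your argument places $u^{c,\gamma}$ in $L^{3,\infty}$ with small norm, you should reconcile your claim with that remark (i.e., check precisely which spaces \cite{KPS} lists) before asserting that the Lorentz-space shortcut closes.
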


%

Theorem \ref{thmE} and Theorem \ref{thmW} can be established using the same arguments as \cite{Karch}, as long as the special stationary solutions $u^{c,\gamma}$ satisfy the following condition 
\begin{equation}\label{eqEst_1}
   |\int_{\mathbb{R}^3}(v\cdot\nabla u^{c,\gamma})\cdot w dx| \le K\|\nabla w\|_{L^2}\|\nabla v\|_{L^2},
\end{equation}
for some constant $K$ small enough, for any divergence free $v, w\in C_c^{\infty}(\mathbb{R}^3)$. 
 In \cite{Karch},  
  (\ref{eqEst_1}) is proved by Hardy's inequality when $u^{c,\gamma}$ is replaced by small Landau solutions. 
 In this paper, we analyze the solutions $u^{c,\gamma}$ where $(c,\gamma)\in M$, and obtain $|\nabla u^{c,\gamma}|\le C(|c|+|\gamma|)/(|x||x'|)$. So (\ref{eqEst_1}) is true if we have
\begin{equation}\label{eqEst_2}
   \int_{\mathbb{R}^3}\frac{|v|^2}{|x||x'|} dx \le K\|\nabla v\|_{L^2}^2,
\end{equation}
for any $v\in C_c^{\infty}(\mathbb{R}^3)$. Notice (\ref{eqEst_2}) cannot be proved by the classical Hardy's inequality. In Section \ref{sec_H}, we prove the following extended Hardy-type inequality, which includes (\ref{eqEst_2}). 


\begin{thm}\label{thmH}
   Let $n\ge 2$, $1\le p<n$, $u\in C_0^1(\mathbb{R}^n)$, $\alpha p>1-n$, $(\alpha+\beta)p>-n$, then there exists some constant  $C$, depending on $p$, $\alpha$ and $\beta$, such that
     \begin{equation}\label{eqH_1_1}
    \||x|^{\beta}|x'|^{\alpha} u\|_{L^p(\mathbb{R}^n)}\le C\| |x|^{\beta+\alpha-\alpha'}|x'|^{\alpha'+1}\nabla u\|_{L^p(\mathbb{R}^n)},
   \end{equation}
   for all  $\alpha' \le \alpha$. Moreover, for any $\alpha'>\alpha$ and any $C>0$, (\ref{eqH_1_1}) fails in general.
\end{thm}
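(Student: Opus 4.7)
The proof plan proceeds in three parts: a pointwise reduction to the critical case $\alpha'=\alpha$, an integration-by-parts proof of that case based on an engineered vector field $V$, and a scaling argument for the sharpness assertion.

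\textbf{Step 1 — Reduction.} Since $|x'|\le |x|$ and $\alpha-\alpha'\ge 0$,
\[
|x|^{(\beta+\alpha-\alpha')p}|x'|^{(\alpha'+1)p} \;=\; |x|^{\beta p}(|x|/|x'|)^{(\alpha-\alpha')p}|x'|^{(\alpha+1)p} \;\ge\; |x|^{\beta p}|x'|^{(\alpha+1)p}.
\]
Hence the RHS of \eqref{eqH_1_1} is smallest at $\alpha'=\alpha$, and proving that case yields the result for all $\alpha'\le \alpha$ with the same constant.

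\textbf{Step 2 — Main inequality at $\alpha'=\alpha$.} Write $x=(x',x_n)$ and denote by $(x',0)\in \mathbb{R}^n$ the vector obtained by padding with a zero last coordinate. A direct computation gives, for all $\sigma,\tau\in\mathbb{R}$,
\[
\dive\bigl(|x|^\sigma|x'|^\tau\, x\bigr) = (\sigma+\tau+n)|x|^\sigma|x'|^\tau,
\]
\[
\dive\bigl(|x|^\sigma|x'|^\tau(x',0)\bigr) = (\tau+n-1)|x|^\sigma|x'|^\tau + \sigma|x|^{\sigma-2}|x'|^{\tau+2}.
\]
Set
\[
V \;:=\; |x|^{\beta p}|x'|^{\alpha p}(x',0)\;-\;\lambda\, |x|^{\beta p-2}|x'|^{\alpha p+2}\, x,\qquad \lambda := \frac{\beta p}{(\alpha+\beta)p+n},
\]
which is well-defined since $(\alpha+\beta)p+n>0$ by hypothesis. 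Applying the first identity with $(\sigma,\tau)=(\beta p-2,\alpha p+2)$ and the second with $(\sigma,\tau)=(\beta p,\alpha p)$, the cross-term $|x|^{\beta p-2}|x'|^{\alpha p+2}$ cancels and one gets $\dive V = (\alpha p+n-1)|x|^{\beta p}|x'|^{\alpha p}$, where $\alpha p+n-1>0$ by the hypothesis $\alpha p>1-n$. Using $|x'|\le|x|$, one also has the pointwise bound $|V| \le (1+|\lambda|)|x|^{\beta p}|x'|^{\alpha p+1}$.

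\textbf{Step 3 — IBP and Hölder.} Write $I := \int |x|^{\beta p}|x'|^{\alpha p}|u|^p\,dx$ and $J := \int|x|^{\beta p}|x'|^{(\alpha+1)p}|\nabla u|^p\,dx$. Integrating by parts against $|u|^p$ (justified for $u\in C_0^1(\mathbb{R}^n)$; the finiteness of the integrals follows from the stated conditions, and standard approximation by cutoffs near the axis $\{x'=0\}$ and the origin handles the singular weights) together with the bound on $|V|$ gives
\[
(\alpha p+n-1)\,I = -p\int V\cdot\nabla u\,|u|^{p-2}u\,dx \;\le\; p(1+|\lambda|)\int|x|^{\beta p}|x'|^{\alpha p+1}|\nabla u||u|^{p-1}\,dx.
\]
Splitting the integrand as $(|x|^\beta|x'|^{\alpha+1}|\nabla u|)\cdot(|x|^{\beta(p-1)}|x'|^{\alpha(p-1)}|u|^{p-1})$ and applying Hölder with conjugate exponents $p$ and $p/(p-1)$ bounds the right-hand side by $p(1+|\lambda|)J^{1/p}I^{(p-1)/p}$. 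Dividing yields \eqref{eqH_1_1} (at $\alpha'=\alpha$) with explicit constant $C=p(1+|\lambda|)/(\alpha p+n-1)$.

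\textbf{Step 4 — Sharpness and main obstacle.} For $\alpha'>\alpha$, test against the concentrated family $u_\epsilon(x):=\phi(|x'|/\epsilon)\psi(x_n)$ with $\phi,\psi$ smooth bumps supported in $[1,2]$. Changing variable $x'=\epsilon y'$ and using $|x|\sim|x_n|$ on the support,
\[
\||x|^\beta|x'|^\alpha u_\epsilon\|_p^p \sim \epsilon^{\alpha p+n-1},\qquad \||x|^{\beta+\alpha-\alpha'}|x'|^{\alpha'+1}\nabla u_\epsilon\|_p^p \sim \epsilon^{\alpha'p+n-1}
\]
(the dominant contribution to $|\nabla u_\epsilon|$ is $\partial_{x'}u_\epsilon\sim \epsilon^{-1}|\phi'(|x'|/\epsilon)|$); the ratio $\sim\epsilon^{(\alpha-\alpha')p}$ tends to $\infty$ as $\epsilon\to 0^+$, precluding any uniform $C$. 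The technical heart of the argument is the construction of $V$: neither divergence identity alone produces a vector field whose divergence matches the LHS weight $|x|^{\beta p}|x'|^{\alpha p}$ while having size $\lesssim|x|^{\beta p}|x'|^{\alpha p+1}$, and the particular linear combination works because the two hypotheses enter exactly as the two positivity conditions needed: $(\alpha+\beta)p+n>0$ for finiteness of $\lambda$, and $\alpha p+n-1>0$ for the coefficient after cancellation.
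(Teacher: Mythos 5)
Your proposal is correct, and it proves the main inequality by a genuinely different route from the paper. The paper passes to spherical coordinates, applies the one-dimensional Hardy inequality in the radial variable $r$ (after the substitution $s=r^{\lambda}$ with $\lambda=(\alpha+\beta)p+n$) to handle the region $\epsilon\le\theta_1\le\pi-\epsilon$, then selects a good slice $\bar\theta_1\in[\epsilon,2\epsilon]$ and runs a separate one-dimensional integration-by-parts argument in $\theta_1$ near each pole to absorb the degenerate weight $|\sin\theta_1|^{\alpha p+n-2}$; the two hypotheses enter as $(\alpha+\beta)p+n>0$ for the radial Hardy step and $\alpha p+n-2>-1$ for the polar-cap step. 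You instead build the divergence-structure vector field $V$ with $\dive V=(\alpha p+n-1)|x|^{\beta p}|x'|^{\alpha p}$ and $|V|\lesssim|x|^{\beta p}|x'|^{\alpha p+1}$, and conclude in one stroke by integration by parts and H\"older; your computations of the two divergence identities, the cancellation of the cross term, the pointwise bound on $|V|$, and the H\"older splitting all check out, and the same two hypotheses reappear as the finiteness of $\lambda$ and the positivity of $\alpha p+n-1$. Your approach buys an explicit constant $C=p(1+|\lambda|)/(\alpha p+n-1)$ and avoids coordinates entirely; the paper's approach is longer but makes the two distinct sources of degeneracy (origin and axis) visible separately. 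The reduction to $\alpha'=\alpha$ and the sharpness construction (bump in $|x'|$ at scale $\delta$ times a fixed bump in $x_n$, giving the ratio $\delta^{(\alpha-\alpha')p}$) are essentially identical to the paper's. The one step you should write out rather than call standard is the vanishing of the boundary fluxes in the integration by parts: the flux of $V|u|^p$ through $\{|x'|=\epsilon\}$ is $O(\epsilon^{\alpha p+n-1})$ and through $\{|x|=\epsilon\}$ is $O(\epsilon^{(\alpha+\beta)p+n})$, both of which tend to zero exactly under the stated hypotheses, so the argument is complete but this verification is where those hypotheses are used a second time.
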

Estimate (\ref{eqEst_2}) is the special case of (\ref{eqH_1_1}) with $p=2$, $\alpha=\alpha'=\beta=-\frac{1}{2}$. Then we also have (\ref{eqEst_1}). Given (\ref{eqEst_1}), Theorem \ref{thmE} and Theorem \ref{thmW}  can be proved by the same arguments used in \cite{Karch}, see also \cite{KPS}. So in this paper   we will only  prove Theorem \ref{thmH} and (\ref{eqEst_1}). 


\begin{rmk}
In \cite{KPS},  Karch, Pilarczyk and Schonbek proved the asymptotic stability of a class of general time-dependent solutions $u$ of (\ref{NSE}) using Fourier analysis,  where  (\ref{eqEst_1})  with $u^{c,\gamma}$ replaced by $u$ is an essential assumption. A list of spaces were given in \cite{KPS} where  (\ref{eqEst_1}) is true if $u^{c,\gamma}$ is in one of those spaces. But the solutions $u^{c,\gamma}$ we discuss here are not in those spaces. 
\end{rmk}

We will analyze in Section \ref{sec_E} the singular behaviors of $u^{c,\gamma}$, $(c,\gamma)\in M$.  In Section \ref{sec_F} we  study the force of $u^{c,\gamma}$, $(c,\gamma)\in M$. Theorem \ref{thmH} will be proved in Section \ref{sec_H}. Then as stated above, Theorem \ref{thmE} and Theorem \ref{thmW} follow with the same arguments as in \cite{Karch}. 


\textbf{Acknowlegement}: We thank Vladim\'{i}r \v{S}ver\'{a}k for bringing to our attention the work \cite{Karch} of Karch and Pilarczyk.


\section{Estimate of the special solutions $u^{c,\gamma}$}\label{sec_E}


\begin{lem}\label{lemS_0}
   Let $K$ be a compact subset of $M$. Then there exists some positive constant $C$, depending only on $K$, such that for any $(c,\gamma)$ in $K$ and $-1\le y\le 1$, 
    \begin{equation}\label{eqS_0_0_1}
         U^{c,\gamma}_{\theta}(y) =-\frac{c_3}{2}\mathrm{sgn}(y)(1-y^2)\ln(1-y^2)+O(1)(|c|+|\gamma|) (1-y^2),
             \end{equation}
   \begin{equation}\label{eqS_0_0_2}
       (U^{c,\gamma}_{\theta})'(y)=c_3\ln(1-y^2)+O(1)(|c|+|\gamma|),
   \end{equation}
   and
   \begin{equation}\label{eqS_0_0_3}
      (U^{c,\gamma}_{\theta})''(y)=-\frac{2c_3y}{1-y^2}+O(1)(|c|+|\gamma|)(|\ln(1-y^2)|^2),
   \end{equation}
   where $O(1)$ denotes some quantity  
    satisfying $|O(1)|\le C$ for some positive constant $C$ depending only on $K$.
\end{lem}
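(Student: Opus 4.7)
The plan is to extract the leading singular behavior near $y=\pm 1$ from the Riccati-type ODE (\ref{eqS_1}) by means of a suitable integrating factor. Dividing (\ref{eqS_1}) by $(1-y^2)^2$ rewrites it as
\[
  \frac{d}{dy}\!\left[\frac{U_\theta^{c,\gamma}}{1-y^2}\right] + \frac{1}{2}\!\left[\frac{U_\theta^{c,\gamma}}{1-y^2}\right]^{2} = \frac{c_3}{1-y^2}.
\]
Setting $W:=U_\theta^{c,\gamma}/(1-y^2)$ and integrating from $0$, where $W(0)=\gamma$, gives the integral identity
\[
   W(y) = \gamma + \frac{c_3}{2}\ln\frac{1+y}{1-y} - \frac{1}{2}\int_0^y W^{2}(s)\,ds. \qquad (\ast)
\]
Since $\ln\frac{1+y}{1-y}=-\mathrm{sgn}(y)\ln(1-y^2)+O(1)$ uniformly on $(-1,1)$, the linear part of $(\ast)$ already displays the leading profile appearing in (\ref{eqS_0_0_1}). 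The task is therefore to control the Riccati correction $\tfrac12\int_0^y W^{2}\,ds$ as a bounded remainder of size $O((|c|+|\gamma|)^{2})$, after which (\ref{eqS_0_0_1}) follows by multiplying $(\ast)$ through by $1-y^2$ and noting that $U_\theta^{c,\gamma}$ vanishes at $\pm 1$ since $c_1=c_2=0$ and $\gamma\in(\gamma^-(c),\gamma^+(c))$ is strictly interior.

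To obtain the required bound on $W$, I would work in the weighted norm $\|W\|_{\ast}:=\sup_{|y|<1}|W(y)|/(1+|\ln(1-y^2)|)$. A direct computation, using the elementary fact $\int_{-1}^{1}(1+|\ln(1-s^2)|)^{2}\,ds<\infty$, shows that the map $T(W)(y):=\gamma+\tfrac{c_3}{2}\ln\tfrac{1+y}{1-y}-\tfrac12\int_0^y W^2\,ds$ is a contraction on a small ball in this weighted space once $|c|+|\gamma|$ is small. By the uniqueness of $U_\theta^{c,\gamma}$ from \cite{LLY2}, this fixed point coincides with $W$, yielding $\|W\|_{\ast}\le C(|c|+|\gamma|)$ in a neighborhood of $(c,\gamma)=(0,0)$; on any compact $K\subset M$ bounded away from the origin the same inequality remains true after enlarging $C$ to depend on $K$, since $|c|+|\gamma|$ is then bounded between positive constants. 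Inserting this bound into $(\ast)$ gives $W(y)=\gamma+\tfrac{c_3}{2}\ln\tfrac{1+y}{1-y}+O((|c|+|\gamma|)^{2})$, and multiplying by $1-y^2$ produces (\ref{eqS_0_0_1}).

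The first derivative follows from $(U_\theta^{c,\gamma})'=-2yW+(1-y^2)W'$ combined with $(1-y^2)W'=c_3-\tfrac12(1-y^2)W^2$: since $(1-y^2)W^2=O((|c|+|\gamma|)^2)$ (using $(1-y^2)(1+|\ln(1-y^2)|)^2\le C$) and $-2yW=c_3|y|\ln(1-y^2)+O(|c|+|\gamma|)$ by the estimate on $W$, while $c_3(|y|-1)\ln(1-y^2)$ is itself bounded by $C|c|$, one arrives at (\ref{eqS_0_0_2}). For the second derivative I would differentiate (\ref{eqS_1}) once to get
\[
   (1-y^2)(U_\theta^{c,\gamma})''=-2c_3y-2U_\theta^{c,\gamma}-U_\theta^{c,\gamma}(U_\theta^{c,\gamma})';
\]
dividing by $1-y^2$ isolates the leading singular term $-2c_3y/(1-y^2)$, while substituting the already proved estimates shows $2U_\theta^{c,\gamma}/(1-y^2)=O(|c|+|\gamma|)(1+|\ln(1-y^2)|)$ and $U_\theta^{c,\gamma}(U_\theta^{c,\gamma})'/(1-y^2)=O((|c|+|\gamma|)^2)(1+|\ln(1-y^2)|^2)$, both absorbed into the remainder in (\ref{eqS_0_0_3}).

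The main technical obstacle is the a priori estimate $\|W\|_{\ast}\le C(|c|+|\gamma|)$: one must show that the quadratic Riccati term in $(\ast)$ is genuinely of higher order than the logarithmically divergent linear data, which is delicate because the data itself blows up at $y=\pm1$. The choice of the weight in $\|\cdot\|_{\ast}$, matched with the logarithmic divergence of $\ln\frac{1+y}{1-y}$ and the $L^2$-integrability of $1+|\ln(1-y^2)|$ on $(-1,1)$, is exactly what makes the contraction scheme close and produces the sharp $(|c|+|\gamma|)$-scaling required for the error terms in (\ref{eqS_0_0_1})--(\ref{eqS_0_0_3}).
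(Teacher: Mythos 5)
Your argument is essentially correct in the regime where it is actually needed, but it takes a genuinely different route from the paper. The paper keeps the equation in the variable $U_\theta$ and treats it as a linear ODE with integrating factor $e^{w}$, $w(y)=\int_0^y\frac{U_\theta-4}{2(1-s^2)}\,ds$; it first extracts a crude bound $|U_\theta|\le \frac{C}{\epsilon}(|c|+|\gamma|)(1+y)^{1-2\epsilon}$ from the smallness $\|U_\theta\|_\infty<4\epsilon$, and then bootstraps through the Duhamel formula to reach $U_\theta=c_3(1+y)\ln(1+y)+O(1)(1+y)$ near $y=-1$ (using the symmetry $U_\theta^{(0,0,c_3),\gamma}(y)=-U_\theta^{(0,0,c_3),-\gamma}(-y)$ for the other endpoint). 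You instead observe that dividing by $(1-y^2)^2$ turns (\ref{eqS_1}) into an exact Riccati equation for $W=U_\theta/(1-y^2)$, and you close a contraction in the log-weighted sup norm $\|W\|_*$. Your computation checks out: the identity $(\ast)$ is correct, $\int_{-1}^1(1+|\ln(1-s^2)|)^2\,ds<\infty$ makes the quadratic term a bounded perturbation, the fixed point agrees with $W$ by ordinary Picard--Lindel\"of uniqueness for the initial value problem at $y=0$ (the boundary-value uniqueness of \cite{LLY2} is not really what is needed there), and the derivative estimates then follow exactly as you indicate. Your route is arguably cleaner: it avoids the $\epsilon$-losses $(1+y)^{1-2\epsilon}$ of the paper's bootstrap and delivers the sharp logarithmic rate in one pass, and it yields the error in (\ref{eqS_0_0_3}) in the form $O(1)(|c|+|\gamma|)(1+|\ln(1-y^2)|^2)$, which is in fact the form the paper needs later (the paper's stated remainder vanishes at $y=0$ while $U_\theta''(0)=-2\gamma-\gamma U_\theta'(0)$ does not, so the extra constant is necessary).

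The one step you should not leave as stated is the extension to a general compact $K\subset M$. Your contraction closes only when $|c|+|\gamma|$ is small, and the sentence ``the same inequality remains true after enlarging $C$ since $|c|+|\gamma|$ is bounded between positive constants'' is circular: to convert a bound $\|W\|_*\le C_K$ into $\|W\|_*\le C(|c|+|\gamma|)$ you must first \emph{have} a uniform bound $\|W\|_*\le C_K$ over $K$, i.e.\ the uniform rate $|U_\theta(y)|\le C(1-y^2)(1+|\ln(1-y^2)|)$, and that is precisely the content of the lemma away from the origin. To be fair, the paper's own proof has the same restriction (it requires $\|U_\theta\|_\infty<4\epsilon$, hence $|(c,\gamma)|<\epsilon/C$), and only the small-parameter case is used in Theorems \ref{thmE} and \ref{thmW}; but if you want the lemma for arbitrary compact $K$ you would need an additional argument, e.g.\ a local analysis of the Riccati equation near $y=\pm1$ using $U_\theta(\pm1)=0$ together with compactness and the continuity of $(c,\gamma)\mapsto U_\theta^{c,\gamma}$, rather than the one-line assertion.
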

\begin{proof}
   For convenience, let $C$ be a  constant depending only on $K$, $O(1)$ be a function satisfying $|O(1)|\le C$ for all $-1\le y\le 1$,  and $C$ and $O(1)$ may vary from line to line.  %
   It is easy to see from (\ref{eqS_1}) that $U^{(0,0,c_3), \gamma}_{\theta}(y)$ and $-U^{(0,0,c_3), -\gamma}_{\theta}(-y)$ satisfy the same equation and have the same value at $y=0$ and therefore they are identically the same. So we only need to prove (\ref{eqS_0_0_1})-(\ref{eqS_0_0_3}) for $-1<y\le 0$. 

  By Theorem 1.5 in \cite{LLY2}, there exists some constant $C$, such that 
   \[
      |\nabla_{c,\gamma}U^{c,\gamma}_{\theta}(y)|\le C, \quad \forall -1<y<1.
   \]
   Using this and  the fact that $U^{0,0}_{\theta}=0$, we have that for all $-1<y<1$, 
   \[
       |U^{c,\gamma}_{\theta}(y)|=|U^{c,\gamma}_{\theta}(y)-U^{0,0}_{\theta}(y)|\le \sup_{(c,\gamma)\in K, -1<y<1}|\nabla_{c,\gamma}U^{c,\gamma}_{\theta}(y)\cdot (c,\gamma)|\le C(|c|+|\gamma|).
   \]
   Thus 
   \begin{equation}\label{eqS_0_1}
      \|U^{c,\gamma}_{\theta}\|_{L^{\infty}(-1,1)}\le C(|c|+|\gamma|).
   \end{equation}
   For simplicity we use $U_{\theta}$ to denote $U^{c,\gamma}_{\theta}$. By (\ref{eqS_1}), we have 
   \[
      U'_{\theta}+\frac{U_{\theta}-4}{2(1-y^2)}U_{\theta}=c_3-\frac{2U_{\theta}}{1-y}.
   \]
   Let 
   \begin{equation}\label{eqS_0_2}
      w(y):=\int_{0}^{y}\frac{U_{\theta}-4}{2(1-s^2)}ds=\ln\frac{1-y}{1+y}+\int_{0}^{y}\frac{U_{\theta}(s)}{2(1-s^2)}ds.
   \end{equation}
   Since $U_{\theta}(0)=\gamma$, we have 
   \begin{equation}\label{eqS_0_3}
      U_{\theta}(y)=\gamma e^{-w}+e^{-w}\int_{0}^{y}e^{w}(c_3-\frac{2U_{\theta}}{1-s})ds.
   \end{equation}
   By (\ref{eqS_0_1}), for any fixed $0<\epsilon<1/2$, 
   \[
       \|U_{\theta}\|_{L^{\infty}(-1,1)}<4\epsilon, \quad \forall |(c,\gamma)|<\epsilon/C.
   \]
   By the above and (\ref{eqS_0_2}), we have that for $-1<y<0$, 
   \[
      |w(y)+\ln\frac{1+y}{1-y}|\le 2\epsilon\int_{y}^{0}\frac{ds}{1-s^2}=\epsilon\ln\frac{1-y}{1+y}.
   \]
    So 
   \[
      e^w\le 4(1+y)^{-1-\epsilon}, \quad e^{-w}\le (1+y)^{1-\epsilon}, \quad -1<y\le 0.
   \]
   By (\ref{eqS_0_1}), (\ref{eqS_0_3}) and the above, we have 
    \begin{equation}\label{eqS_0_4}
         |U_{\theta}|  \le C\gamma(1+y)^{1-\epsilon}+C(|c|+|\gamma|)e^{-w(y)}\int_{y}^{0} e^{w(s)}ds\\
                             \le \frac{C}{\epsilon}(|c|+|\gamma|)(1+y)^{1-2\epsilon}. 
   \end{equation}
Denote $\mu_1=\int_{0}^{-1}\frac{U_{\theta}(s)}{2(1-s^2)}ds$. By (\ref{eqS_0_4}) we have $\mu_1=O(1)(|c|+|\gamma|)/\epsilon$. 
  Then by (\ref{eqS_0_2}) and (\ref{eqS_0_4}), we have
\[
   w(y)  =\ln\frac{1-y}{1+y}+\mu_1+\int_{-1}^{y}\frac{U_{\theta}(s)}{2(1-s^2)}ds
            =\ln\frac{1-y}{1+y}+\mu_1+O(1)\frac{|c|+|\gamma|}{\epsilon}(1+y)^{1-2\epsilon}.
\]
Then we have 
\[
  \begin{split}
  &  e^{w}=\frac{1-y}{1+y}e^{\mu_1}(1+O(1)\frac{|c|+|\gamma|}{\epsilon}(1+y)^{1-2\epsilon}),\\ 
  & e^{-w}=\frac{1+y}{1-y}e^{-\mu_1}(1+O(1)\frac{|c|+|\gamma|}{\epsilon}(1+y)^{1-2\epsilon}). 
  \end{split}
\]
Using the above, (\ref{eqS_0_3}) and  (\ref{eqS_0_4}),  we have that for $-1<y\le 0$, 
\begin{equation*}
   \begin{split}
       U_{\theta}(y) & =\gamma e^{-w(y)}+c_3e^{-w(y)}\int_{0}^{y}e^{w(s)}ds-e^{-w(y)}\int_{0}^{y}e^{w(s)}\frac{2U_{\theta}}{1-s}ds\\
                            & =c_3(1+y)\ln(1+y)+O(1)\frac{|c|+|\gamma|}{\epsilon} (1+y).
   \end{split}
\end{equation*}
Estimate (\ref{eqS_0_0_1}) is established. 

   
   Next, we make the estimate of $U'_{\theta}$ and prove (\ref{eqS_0_0_2}). By (\ref{eqS_1}) and (\ref{eqS_0_0_1}), we have that for $-1<y\le 0$, 
   \begin{equation*}
         U_{\theta}'  =c_3-\frac{1}{1-y^2}(\frac{1}{2}U_{\theta}^2+2yU_{\theta})
                           =c_3\ln(1+y)+O(1)(|c|+|\gamma|). 
   \end{equation*}
   Estimate (\ref{eqS_0_0_2}) is established. 
   
   Differentiating (\ref{eqS_1}), and using (\ref{eqS_0_0_1}) and (\ref{eqS_0_0_2}), we have for $-1<y\le 0$ that 
   \[
      (1-y^2)U_{\theta}''  =-2c_3y-U_{\theta}U'_{\theta}-2U_{\theta}
                                     =2c_3+O(1)(|c|+|\gamma|)(1+y)|\ln(1+y)|^2.
   \]
   Estimate (\ref{eqS_0_0_3}) follows immediately.  The lemma is proved. 

\end{proof}

   \begin{cor}\label{corS_2}
  Let $K$ be a compact subset of $M$. Then there exist some positive constant $C$, depending only on $K$, such that for all $(c,\gamma)$ in $K$, and $x$ in $\mathbb{R}^3\setminus\{x'=0\}$.  
  \begin{equation}\label{eqS_2_0_1}
     u^{c,\gamma}_{\theta}(x)=-\frac{c_3\mathrm{sgn}(x_3)|x'|}{|x|^2}\ln \frac{|x'|}{|x|}+\frac{O(1)(|c|+|\gamma|)|x'|}{|x|^2},
  \end{equation}
  \begin{equation}\label{eqS_2_0_2}
     u^{c,\gamma}_{r}(x)=\frac{2c_3}{|x|}\ln\frac{|x'|}{|x|}+\frac{O(1)(|c|+|\gamma|)}{|x|}, 
  \end{equation}
  and 
  \begin{equation}\label{eqS_2_0_3}
     |\nabla u (x)|=\frac{2|c_3|}{|x||x'|}+\frac{O(1)(|c|+|\gamma|)}{|x|^2}\ln\frac{|x|}{|x'|}. 
  \end{equation}
\end{cor}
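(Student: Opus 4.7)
The plan is to derive all three estimates by substituting the one-variable asymptotics of Lemma~\ref{lemS_0} into the explicit $(-1)$-homogeneous formulas (\ref{eq_u_cgamma}), using throughout the spherical identifications $y=\cos\theta=x_3/|x|$, $\sin\theta=|x'|/|x|$, $1-y^2=|x'|^2/|x|^2$, $\ln(1-y^2)=2\ln(|x'|/|x|)$, and $\mathrm{sgn}(y)=\mathrm{sgn}(x_3)$.

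Estimates (\ref{eqS_2_0_1}) and (\ref{eqS_2_0_2}) are immediate. For (\ref{eqS_2_0_2}), apply $u^{c,\gamma}_r(x)=|x|^{-1}(U^{c,\gamma}_\theta)'(y)$ and substitute (\ref{eqS_0_0_2}); for (\ref{eqS_2_0_1}), apply $u^{c,\gamma}_\theta(x)=U^{c,\gamma}_\theta(y)/(|x|\sin\theta)=U^{c,\gamma}_\theta(y)/|x'|$ and substitute (\ref{eqS_0_0_1}).

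For (\ref{eqS_2_0_3}) I would expand $\nabla u^{c,\gamma}$ in the orthonormal spherical frame $\{e_r,e_\theta,e_\phi\}$. For an axisymmetric no-swirl $(-1)$-homogeneous field $u=(f/r)\,e_r+(g/r)\,e_\theta$ with $f(\theta)=(U^{c,\gamma}_\theta)'(\cos\theta)$ and $g(\theta)=U^{c,\gamma}_\theta(\cos\theta)/\sin\theta$, the nonzero entries of $\nabla u$ in this frame are $-f/r^2$, $-g/r^2$, $(f'-g)/r^2$, $(g'+f)/r^2$, and $(f+g\cot\theta)/r^2$, where $f'(\theta)=-\sin\theta\,(U^{c,\gamma}_\theta)''(y)$. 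Only $(f'-g)/r^2$ is genuinely singular of order $1/|x'|$; by (\ref{eqS_0_0_3}),
\[
\frac{f'}{r^2}=\frac{-\sin\theta\,(U^{c,\gamma}_\theta)''(y)}{|x|^2}=\frac{2c_3 x_3}{|x|^2|x'|}+\frac{O(1)(|c|+|\gamma|)\ln(|x|/|x'|)}{|x|^2},
\]
since $\sin\theta\cdot|x'|\ln^2(|x|/|x'|)/|x|^3\lesssim\ln(|x|/|x'|)/|x|^2$. The four remaining entries are each $O((|c|+|\gamma|)\ln(|x|/|x'|)/|x|^2)$, using $f=O(\ln)$ from (\ref{eqS_0_0_2}), $g=O(\sin\theta\,\ln)$ from (\ref{eqS_0_0_1}), and the divergence identity $g'+f+g\cot\theta=0$ to bound $(g'+f)/r^2$ and $(f+g\cot\theta)/r^2$. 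Taking the Euclidean norm gives $|\nabla u^{c,\gamma}|=2|c_3||x_3|/(|x|^2|x'|)+\text{remainder}$, and the gap $2|c_3|/(|x||x'|)-2|c_3||x_3|/(|x|^2|x'|)=2|c_3|(|x|-|x_3|)/(|x|^2|x'|)\lesssim|c_3||x'|/|x|^3$ (since $|x|-|x_3|\lesssim|x'|^2/|x|$) is absorbed into the same logarithmic remainder. The main technical obstacle is this last bookkeeping: verifying that the four subleading matrix entries, together with the $\sin\theta\,\ln^2$ correction in $(U^{c,\gamma}_\theta)''$ and the $|x_3|$-versus-$|x|$ discrepancy, all combine into a single $O((|c|+|\gamma|)\ln(|x|/|x'|)/|x|^2)$ remainder as claimed in (\ref{eqS_2_0_3}).
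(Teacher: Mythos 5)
Your proposal is correct and follows essentially the same route as the paper: both derive (\ref{eqS_2_0_1}) and (\ref{eqS_2_0_2}) by direct substitution of Lemma \ref{lemS_0} into (\ref{eq_u_cgamma}), and both obtain (\ref{eqS_2_0_3}) by expanding $\nabla u$ in the spherical frame and isolating the single singular entry $\tfrac{1}{r}\partial_\theta u_r=-\sin\theta\,(U_\theta^{c,\gamma})''/r^2$, whose leading term $2c_3y/(1-y^2)$ produces $2|c_3|/(|x||x'|)$. Your accounting of the remaining frame components (via the divergence identity) and of the $|x_3|$-versus-$|x|$ discrepancy is a slightly more explicit version of the paper's bound $|\nabla e_r|+|\nabla e_\theta|\le C/r$ combined with the estimates on $u_r$ and $u_\theta$.
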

\begin{proof}
   For convenience write $u^{c,\gamma}=u$. By definition, $u=u_re_r+u_{\theta}e_{\theta}$, where $u_r=\frac{1}{r}U'_{\theta}$, $u_{\theta}=\frac{1}{r\sin\theta}U_{\theta}$. Denote $y=\cos\theta$, by Lemma \ref{lemS_0}, we have 
   \[
      U^{c,\gamma}_{\theta}(y)=-c_3\mathrm{sgn}(\cos\theta)\sin^2\theta \ln\sin\theta+O(1)(|c|+|\gamma|) \sin^2\theta.
   \]
   Since $r=|x|$ and $|x'|=|x|\sin\theta$, estimate (\ref{eqS_2_0_1}) follows from the above. Estimate (\ref{eqS_2_0_2}) follows from (\ref{eqS_0_0_2}).

   Next, we compute
   \begin{equation*}
      \nabla u=\nabla u_r e_r+u_r\nabla e_r+\nabla u_{\theta} e_{\theta}+u_{\theta}\nabla e_{\theta}.
   \end{equation*}
   By (\ref{eqS_0_0_1}) and (\ref{eqS_0_0_2}), we have 
   \begin{equation*}
     \begin{split}
      |\nabla u_r| & =|\frac{\partial u_r}{\partial r}e_r+\frac{1}{r}\frac{\partial u_{r}}{\partial \theta}e_{\theta}|
                  =|-\frac{1}{r^2}U'_{\theta}(y)e_{r}+\frac{1}{r^2}U''_{\theta}(y)(-\sin\theta)e_{\theta}|\\
                        & =\frac{2|c_3|}{r^2\sin\theta}+O(1)\frac{(|c|+|\gamma|)}{r^2}\ln\sin\theta
                         =\frac{2|c_3|}{|x||x'|}+O(1)\frac{|c|+|\gamma|}{|x|^2}\ln\frac{|x|}{|x'|},
                  \end{split}
   \end{equation*}   
   and 
   \begin{equation*}
     \begin{split}
        |\nabla u_{\theta}| & =|\frac{\partial u_{\theta}}{\partial r}e_r+\frac{1}{r}\frac{\partial u_{\theta}}{\partial \theta}e_{\theta}|
                            =|-\frac{1}{r^2}U_{\theta}e_r+\frac{1}{r^2}U'_{\theta}(-\sin\theta)e_{\theta}|\\
                           & \le \frac{C(|c|+|\gamma|)}{|x|^2}\frac{|x'|}{|x|}\ln\frac{|x|}{|x'|}. 
     \end{split}
   \end{equation*}
   Since $|\nabla e_r|+|\nabla e_{\theta}|\le C/r$, estimate (\ref{eqS_2_0_3}) follows from the above. 
   \end{proof}

\section{Force of $u^{c,\gamma}$, $(c,\gamma)\in M$}\label{sec_F}
In this section, we study the force of the special solutions $u^{c,\gamma}$ and prove Proposition \ref{prop_force}, where $(c,\gamma)$ in $M$ and $M$ is the set defined by (\ref{eqS_M}).  
 Recall that
 $(r,\theta, \phi)$ are the polar coordinates, let $\rho=r\sin\theta$, $(\rho, \phi, z)$ be the cylindrical coordinates, $y=\cos\theta$. Recall $(u^{c,\gamma}, p^{c,\gamma})$ are given by (\ref{eq_u_cgamma}), 
 where $U^{c,\gamma}_{\theta}(y)$ is a solution of (\ref{eqS_1}). For convenience, denote 
 $u=u^{c,\gamma}$, $p=p^{c,\gamma}$ and $U_{\theta}=U_{\theta}^{c,\gamma}$.  In Euclidian coordinates, $x=(x_1,x_2, x_3)$ and $u=(u_1, u_2, u_3)$. 

 \noindent\emph{Proof of Proposition \ref{prop_force}}:
 Let $(c,\gamma)\in M$. For any $R>0$, let 
 \begin{equation}\label{eq_O}
   \Omega:=\{x\in \mathbb{R}^3| |x'|\le R, -R<x_3<R\}. 
\end{equation}
We prove  (\ref{eqF_2}) and (\ref{eqF_3}) for any $\varphi\in C_c^{\infty}(\Omega)$. 
 Throughout the proof we denote $O(1)$ as some quantity satisfying $|O(1)|\le C$ for some $C>0$ depending only on $(c,\gamma)$, $R$ and $\varphi$. 

By Lemma \ref{lemS_0}, 
\begin{equation}\label{eqA_1}
   |U_{\theta}(y)| 
   =O(1)\sin^2\theta|\ln\sin\theta|, \textrm{ } |(U_{\theta})'(y)|=O(1)|\ln\sin\theta|,  \textrm{ }  |(U_{\theta})''(y)|= \frac{O(1)}{\sin^2\theta}.
\end{equation}
Recall that here '' $'$ " denote the derivative with respect to $y$.  
By Corollary \ref{corS_2} and (\ref{eq_u_cgamma}), we have
\begin{equation}\label{eqA_u}
    |u_{\theta}|=\frac{O(1)\sin\theta |\ln \sin\theta|}{r}, \textrm{ }   |u_{r}|=\frac{O(1)|\ln \sin\theta|}{r},  \textrm{ }  |\nabla u|=\frac{O(1)}{r^2\sin\theta},  \textrm{ }   |p|=\frac{O(1)|\ln\sin\theta|}{r^2}. 
  \end{equation}


We first prove (\ref{eqF_3}).  
For any $\epsilon>0$, denote 
\[
   \Omega_{\epsilon}:=\{x\in \mathbb{R}^3| |x'|\le \epsilon, -R<x_3<R\}.
\]
Let $o_{\epsilon}(1)$ be a function where $o_{\epsilon}(1)\to 0$ as $\epsilon\to 0$.
%
Since $u\in C^{\infty}(\mathbb{R}^3\setminus\{x'=0\})$, we have $\dive{}\textrm{ }u=0$ in $\mathbb{R}^3\setminus\{x'=0\}$. Therefore
\[
   \int_{\mathbb{R}^3}u\cdot \nabla \varphi dx  =\int_{\Omega\setminus\Omega_{\epsilon}}u\cdot \nabla \varphi dx+\int_{\Omega_{\epsilon}}u\cdot \nabla \varphi dx =-\int_{\partial \Omega_{\epsilon}\cap\{|x'|=\epsilon\}}u\cdot \nabla \varphi dx+\int_{\Omega_{\epsilon}}u\cdot \nabla \varphi dx. 
\]
By (\ref{eqA_u}), we have $|u|\le C/|x|$. So 
\[
  \int_{\partial \Omega_{\epsilon}}|u\cdot \nabla \varphi| dx\le \int_{\partial\Omega_{\epsilon}}\frac{C}{|x|}d\sigma(x)\le C\epsilon|\ln\epsilon|, \quad 
   \int_{\Omega_{\epsilon}}|u\cdot \nabla \varphi| dx \le \int_{\Omega_{\epsilon}}\frac{C}{|x|}dx\le C\epsilon^2|\ln\epsilon|. 
\]
Sending $\epsilon$ to $0$ in the above leads to (\ref{eqF_3}). 

Next, we prove (\ref{eqF_2}). Denote the stress tensor
\[
   T_{ij}(u):=p\delta_{ij}+u_iu_j-(\partial_{x_j}u_i+\partial_{x_i}u_j). 
\]
Then (\ref{eqF_2}) is equivalent to 
\begin{equation}\label{eqF}
    \int_{\Omega}T_{ij}(u)\partial_{x_i}\varphi dx=[4\pi c_3\int_{-R}^{R}\ln |x_3|\partial_{x_3}\varphi(0,0,x_3)dx_3-b\varphi(0)]\delta_{j3}e_3, \quad \forall \varphi\in C_c^{\infty}(\Omega),
 \end{equation}
 where $b=b^{c,\gamma}$ is given by (\ref{eqA_b}). 
 
 \noindent\textbf{Claim $1$}:  $T_{ij}(u)\in L^q_{loc}(\mathbb{R}^3)$, for any $q<\frac{3}{2}$.  
 
To prove the Claim, notice that by (\ref{eqA_u}), we have that
\begin{equation}\label{eqA_T_esti}
   |T_{ij}|\le |p|+|u|^2+2|\nabla u|\le \frac{C}{r^2\sin\theta}. 
\end{equation} 
So for any $R>0$ and $\Omega$ defined by (\ref{eq_O}), we have, using $q<\frac{3}{2}$,  
\[
   \int_{\Omega}|T_{ij}|^q  \le C\int_{B_{2R}}\frac{1}{r^{2q}|\sin\theta|^q}dx 
     =C\int_{0}^{R}\int_{0}^{\pi}\int_{0}^{2\pi}\frac{1}{r^{2q-2}|\sin\theta|^{q-1}}r^2\sin\theta d\phi d\theta dr \le C.
\]
The Claim is proved.

Using  Claim $1$ and the fact that $\partial_{x_i}T_{ij}=0$ in $\mathbb{R}^3\setminus\{x'=0\}$ for any $1\le j\le 3$,  we have that
\begin{equation*}
   -\int_{\Omega}T_{ij}\partial_{x_i}\varphi dx  =-\int_{\Omega\setminus \Omega_{\epsilon}}T_{ij}\partial_{x_i}\varphi dx-\int_{\Omega_\epsilon}T_{ij}\partial_{x_i}\varphi dx
      =\int_{\partial \Omega_{\epsilon}}T_{\ij}\cdot \nu_i \varphi dx-\int_{\Omega_\epsilon}T_{ij}\partial_{x_i}\varphi dx. 
\end{equation*}
Let 
\[
   L_j : =\int_{\partial \Omega_{\epsilon}}T_{\ij}\cdot \nu_i \varphi dx. 
\]
Since  $T_{ij}\in L^1(\Omega)$, we have $\int_{\Omega_\epsilon}T_{ij}\partial_{x_i}\varphi dx=o_{\epsilon}(1)$. So for each $j=1, 2, 3$, 
\begin{equation}\label{eqA_5}
   -\int_{\Omega}T_{ij}\partial_{x_i}\varphi dx =L_j+o_{\epsilon}(1). 
\end{equation}
By computation
\[
   L_j 
       =\int_{\partial \Omega_{\epsilon}\cap\{|x'|=\epsilon\}}T_{\ij}\cdot \nu_i \varphi(0,0,x_3)+ O(1) \epsilon\int_{\partial \Omega_{\epsilon}\cap\{|x'|=\epsilon\}}|T_{\ij}|
       = :L_j^{(1)}+L_j^{(2)}. 
\]
By (\ref{eqA_T_esti}), we have that for $j=1,2,3$, 
\begin{equation}\label{eqA_4}
    |L_j^{(2)}|\le C\epsilon \int_{\partial \Omega_{\epsilon}\cap\{|x'|=\epsilon\}}|T_{\ij}|d\sigma\le C\int_{-R}^{R}\frac{\epsilon}{\sqrt{\epsilon^2+x_3^2}}dx_3\le C\epsilon|\ln\epsilon|\to 0, \textrm{ as } \epsilon\to 0. 
\end{equation}


\begin{lem}\label{lemA_5}
   \[
      L^{(1)}_j=0, \quad j=1,2. 
      \]
\end{lem}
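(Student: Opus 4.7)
The plan is to exploit the axisymmetric no-swirl structure of $u^{c,\gamma}$ to show that the integrand of $L_j^{(1)}$, integrated in $\phi$ on the cylinder $\{|x'| = \epsilon\}$, vanishes identically for $j = 1, 2$.

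First I would parametrize the lateral boundary $\{|x'|=\epsilon,\,-R<x_3<R\}$ using cylindrical coordinates $(\phi,x_3)$. The outward normal to $\Omega\setminus\Omega_\epsilon$ on this piece is $\nu = -(\cos\phi,\sin\phi,0)$. Since $u$ is axisymmetric with no swirl, I write $u = u_\rho(\rho,z)\,e_\rho + u_z(\rho,z)\,e_z$, so in Cartesian coordinates $u_1 = u_\rho\cos\phi$, $u_2 = u_\rho\sin\phi$, $u_3 = u_z$, and $p = p(\rho,z)$ is $\phi$-independent.

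Next I would carry out the (short) direct computation of $T_{ij}$ using
$\partial_1 = \cos\phi\,\partial_\rho - \tfrac{\sin\phi}{\rho}\partial_\phi$,
$\partial_2 = \sin\phi\,\partial_\rho + \tfrac{\cos\phi}{\rho}\partial_\phi$.
The outcome is that on $\{\rho=\epsilon\}$,
\begin{equation*}
T_{11} = p + u_\rho^2\cos^2\phi - 2\bigl(\cos^2\phi\,\partial_\rho u_\rho + \sin^2\phi\,u_\rho/\rho\bigr),
\end{equation*}
\begin{equation*}
T_{12} = T_{21} = \sin\phi\cos\phi\bigl[u_\rho^2 - 2\partial_\rho u_\rho + 2 u_\rho/\rho\bigr],
\end{equation*}
with analogous expressions for $T_{22}$. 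Computing the traction vector yields the crucial identities
\begin{equation*}
T_{i1}\nu_i = -\cos\phi \cdot F(\rho,z), \qquad T_{i2}\nu_i = -\sin\phi \cdot F(\rho,z),
\end{equation*}
where $F(\rho,z) := p + u_\rho^2 - 2\partial_\rho u_\rho$ is a function of $(\rho,z)$ only. The cross terms involving $\sin^2\phi\cos\phi$ and $\sin\phi\cos^2\phi$ combine via $\sin^2\phi+\cos^2\phi=1$ so that all dependence on $\phi$ collapses into the single prefactor $\cos\phi$ (resp.\ $\sin\phi$); this is the only step requiring care, and is the main obstacle.

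Finally, since the test function on the axis $\varphi(0,0,x_3)$ is independent of $\phi$, I would conclude
\begin{equation*}
L^{(1)}_1 = \int_{-R}^{R}\!\!\int_{0}^{2\pi}\!\!\bigl(-\cos\phi\bigr)F(\epsilon,x_3)\,\varphi(0,0,x_3)\,\epsilon\,d\phi\,dx_3 = 0,
\end{equation*}
because $\int_0^{2\pi}\cos\phi\,d\phi = 0$, and identically $L^{(1)}_2 = 0$ by the $\int_0^{2\pi}\sin\phi\,d\phi = 0$. This completes the proof. (The analogous computation for $j=3$ gives $T_{i3}\nu_i = -G(\rho,z)$ with no $\phi$-dependence, which is why $L^{(1)}_3$ does not vanish and instead produces the delta and log-delta source terms in \eqref{eqF_1}.)
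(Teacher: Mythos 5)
Your proposal is correct and follows essentially the same route as the paper: both exploit the axisymmetric no-swirl form $u=u_\rho e_\rho+u_z e_z$ to show that on the cylinder $\{\rho=\epsilon\}$ the tractions satisfy $T_{i1}\nu_i=F(\rho,z)\cos\phi$ and $T_{i2}\nu_i=F(\rho,z)\sin\phi$ (your $F=p+u_\rho^2-2\partial_\rho u_\rho$ agrees with the paper's $G$ after simplification, up to the sign convention for $\nu$), so the $\phi$-integral vanishes. No gaps.
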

\begin{proof}
   We will show that $T_{ij}\cdot\nu_i=F(|x'|, x_3)x_{j}$ for some function $F(|x'|, x_3)$, $j=1,2$, so its integral on any cylinder $\{|x'|=\epsilon\}$ vanishes.  
   Let $x'=(x_1,x_2)$, $u'=(u_1,u_2)$, $\nabla'=(\partial_1, \partial_2)$, $(\rho, \phi, z)$ be the cylindrical coordinates, and the unit normal
$
   e_{\rho}=(\cos\phi,\sin\phi,0), e_{\phi}=(-\sin\phi, \cos\phi, 0),  e_{z}=(0,0,1). 
$
So we have $x=\rho e_{\rho}+ze_z, \textrm{  }x'=\rho e_{\rho}$. Notice $u$ is axisymmetric no-swirl,  
 we can write
$
   u=u_{\rho}e_{\rho}+u_ze_z, 
$
where $u_{\rho}$ and $u_{z}$ are both independent of $\phi$. 
By computation, 
\[
    x'\cdot u'=\rho u_{\rho},\quad x'\cdot\nabla' u=\rho\frac{\partial u_{\rho}}{\partial \rho} e_{\rho}+\rho\frac{\partial u_{z}}{\partial \rho} e_{z}, 
\]
%
and
\[
   \nabla (x'\cdot u')=\nabla(\rho u_{\rho})=\frac{\partial (\rho u_{\rho})}{\partial \rho}e_{\rho}+\frac{\partial (\rho u_{\rho})}{\partial z}e_z.
\]
On $\partial \Omega_{\epsilon}\cap \{|x'|=\epsilon\}$, the outer-normal $\nu=\frac{1}{\rho}(x_1,x_2, 0)$. Since $u$ is axisymmetric, $u_{\rho}$ is independent of $\phi$,  so $u_1=u_{\rho}(\rho, z)\cos\phi$, and we have 
\[
  \begin{split}
   T_{i1}\cdot \nu_i & =\frac{1}{\rho}\left(px_1+x'\cdot u'u_1-x'\cdot\nabla' u_1-\partial_1(x'\cdot u')+u_1\right)\\
                    & =\frac{1}{\rho}\left(p\rho\cos\phi+\rho u_{\rho}u_\rho\cos\phi-\rho\frac{\partial u_{\rho}}{\partial \rho}\cos\phi-\frac{\partial (\rho u_{\rho})}{\partial \rho}\cos\phi+u_{\rho}\cos\phi\right)\\
                   &=G(\rho,z)\cos\phi, 
  \end{split}
\]
where 
\[
   G(\rho, z)=\frac{1}{\rho}\left(p\rho+\rho u_{\rho}u_\rho-\rho\frac{\partial u_{\rho}}{\partial \rho}-\frac{\partial (\rho u_{\rho})}{\partial \rho}+u_{\rho}\right).
\]
So
\[
    L_1^{(1)}= \int_{\rho=\epsilon}T_{i1}\varphi_1(0,0,z)\nu_i d\sigma
     =\epsilon\int_{-R}^{R}G(\epsilon,z)\varphi_1(0,0,z)dz\int_{0}^{2\pi}\cos\phi d\phi
     =0
\]
With similar argument we also have $L_2^{(1)}=0$. The lemma is proved.
\end{proof}

\begin{lem}\label{lemA_6}
\[
  \lim_{\epsilon \to 0}L^{(1)}_3=4\pi c_3\int_{-R}^{R}\ln |x_3|\partial_{x_3}\varphi(0,0,x_3)dx_3-b\varphi(0), 
\]
where $b$ is the constant defined by (\ref{eqA_b}).
\end{lem}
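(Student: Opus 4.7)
The plan is to evaluate $L_3^{(1)}$ by exploiting axisymmetry to reduce to a one-dimensional integral, rescaling to isolate the universal singular piece, and then identifying the remaining constant with $-b$ via the ODE. On the cylinder $\{\rho=\epsilon\}$ the outward normal is $e_\rho$, so the pressure drops out (since $\nu_3=0$), and because $u = u_\rho e_\rho + u_z e_z$ is independent of $\phi$, a direct computation gives the $\phi$-independent integrand $T_{i3}\nu_i = u_\rho u_z - \partial_z u_\rho - \partial_\rho u_z$. Substituting $u_\rho = \rho U_\theta'(y)/r^2 + zU_\theta(y)/(\rho r)$, $u_z = zU_\theta'(y)/r^2 - U_\theta(y)/r$ (with $y=z/r$, $r=\sqrt{\rho^2+z^2}$) and exploiting $(-1)$-homogeneity, one verifies that $\epsilon^2(u_\rho u_z - \partial_z u_\rho - \partial_\rho u_z)/(1-y^2)^{3/2}$ depends only on $y$. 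After integrating over $\phi$ and changing variables from $z$ to $y$ via $dz/dy = \epsilon/(1-y^2)^{3/2}$, one obtains
\[
L_3^{(1)} = 2\pi\int_{-y_\epsilon}^{y_\epsilon} I(y)\,\varphi(0,0,z(y))\,dy,\quad y_\epsilon = R/\sqrt{R^2+\epsilon^2},
\]
where $z(y) = y\epsilon/\sqrt{1-y^2}$ and $I(y) = y(U_\theta')^2 + \frac{(2y^2-1)U_\theta U_\theta'}{1-y^2} - \frac{yU_\theta^2}{1-y^2} - (1-2y^2)U_\theta'' + 2yU_\theta' - 2U_\theta$.

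By Lemma \ref{lemS_0}, $U_\theta''(y) = -2c_3 y/(1-y^2) + O(|\ln(1-y^2)|^2)$ near $y=\pm 1$, and the remaining terms of $I$ are integrable on $(-1,1)$; hence $I(y) = -\frac{2c_3 y}{1-y^2} + I_{\rm reg}(y)$ with $I_{\rm reg}\in L^1(-1,1)$. The singular piece is the total derivative $-\frac{2c_3 y}{1-y^2} = \frac{d}{dy}[c_3\ln(1-y^2)]$, so integration by parts in $y$ (boundary terms vanish because $z(\pm y_\epsilon)=\pm R$ and $\varphi(0,0,\pm R)=0$), followed by switching back to the $z$-variable using $\ln(1-y^2) = 2\ln\epsilon - \ln(\epsilon^2+z^2)$, converts the singular contribution to $2\pi c_3\int_{-R}^R\ln(\epsilon^2+z^2)\partial_{z}\varphi(0,0,z)dz$ (the $\ln\epsilon$ term is killed by $\int\partial_z\varphi\,dz = 0$). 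A further integration by parts gives $-4\pi c_3\int z\varphi(0,0,z)/(\epsilon^2+z^2)dz$, and since $z/(\epsilon^2+z^2)\to\mathrm{p.v.}(1/z)$ distributionally, the limit is $4\pi c_3\int_{-R}^R\ln|x_3|\partial_{x_3}\varphi(0,0,x_3)dx_3$. For the regular piece, $\varphi(0,0,z(y))\to\varphi(0)$ pointwise on $(-1,1)$ with a uniform bound $\|\varphi\|_\infty$, so dominated convergence yields $2\pi\int I_{\rm reg}(y)\varphi(0,0,z(y))dy \to 2\pi\varphi(0)\int_{-1}^1 I_{\rm reg}(y)dy$.

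The remaining task, and the main technical obstacle, is to establish the algebraic identity $2\pi\int_{-1}^1 I_{\rm reg}(y)dy = -b$. Substituting the differentiated ODE $(1-y^2)U_\theta'' + U_\theta U_\theta' + 2U_\theta + 2c_3 y = 0$ (obtained from (\ref{eqS_1})) into $I + \frac{2c_3 y}{1-y^2}$ causes the $U_\theta U_\theta'$ terms to cancel; then applying (\ref{eqS_1}) to rewrite $U_\theta'$ and integrating by parts on $2yU_\theta'$ (legitimate since $U_\theta(\pm 1)=0$ for $(c,\gamma)\in M$ with $\gamma\in(\gamma^-,\gamma^+)$) should collapse the expression to the $b$-integrand of (\ref{eqA_b}). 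This reduction requires careful bookkeeping across multiple ODE substitutions and boundary-term evaluations, and is where all $2\pi$ factors and signs must be tracked; a secondary technical point is ensuring that the dominated convergence argument in Step 2 applies uniformly despite the $\epsilon$-dependent integration interval, which follows from the integrability of $I_{\rm reg}$ up to $y = \pm 1$.
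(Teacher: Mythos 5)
Your proposal follows essentially the same route as the paper: reduce $L_3^{(1)}$ by axisymmetry to a one-dimensional integral in $y$, use the differentiated ODE to split off the exact singular kernel $-2c_3y/(1-y^2)$ (the paper's term $A$, written there as $-2c_3x_3/r^2$) whose limit gives the $4\pi c_3\ln|x_3|$ distribution, pass to the limit in the regular remainder by dominated convergence to get a constant times $\varphi(0)$, and identify that constant via the cancellation of the $U_\theta U_\theta'$ terms and a single integration by parts on $2yU_\theta'$ using $U_\theta(\pm 1)=0$ — your $I(y)$ is exactly the paper's $\sum_i T_{i3}x_i$ after the Jacobian $r^3/\epsilon^2$, and your two "dominated convergence" points correspond to the paper's $B_2\to 0$ and $b_\epsilon\to b$ steps. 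The final identification you defer as "careful bookkeeping" is precisely the short computation the paper carries out (its $b_\epsilon$ formula and the boundary term $2\delta(U_\theta(\delta)+U_\theta(-\delta))\to 0$), and it collapses to the integrand of (\ref{eqA_b}) exactly as you describe.
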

\begin{proof}
 Recall 
 \[
    L_3^{(1)}=\frac{1}{\epsilon}\int_{\rho=\epsilon}(T_{13}x_1+T_{23}x_2)\varphi(0,0,x_3),
 \]
 and for $i=1,2$, 
 \[
    T_{i3}=u_iu_3-\frac{\partial u_i}{\partial x_3}-\frac{\partial u_3}{\partial x_i}.
 \]
 Since $u=(u_1, u_2, u_3)=\frac{1}{r}U_{\theta}'e_r+\frac{1}{r\sin\theta}U_{\theta}e_{\theta}$,  we have
\begin{equation*}
   \begin{split}
      & u_1(x_1,x_2,x_3)=\frac{x_1}{r^2}U'_{\theta}(y)+\frac{x_1x_3}{r\rho^2}U_{\theta}(y),\\
      & u_2(x_1,x_2,x_3)=\frac{x_2}{r^2}U'_{\theta}(y)+\frac{x_2x_3}{r\rho^2}U_{\theta}(y),\\
      & u_3(x_1,x_2,x_3)=\frac{x_3}{r^2}U'_{\theta}(y)-\frac{1}{r}U_{\theta}(y).
   \end{split}
\end{equation*}
Recall that $r^2=x_1^2+x_2^2+x_3^2$, $\rho^2=x_1^2+x_2^2$, $y=\cos\theta=\frac{x_3}{r}=\frac{x_3}{\sqrt{x_1^2+x_2^2+x_3^2}}$. By computation we have 
\[
   \frac{\partial u_i}{\partial x_3}=\frac{x_i}{r^3}U_{\theta}-\frac{x_ix_3}{r^4}U'_{\theta}+\frac{x_i\rho^2}{r^5}U''_{\theta}, \quad \frac{\partial u_3}{\partial x_i} =\frac{x_i}{r^3}U_{\theta}-\frac{x_ix_3}{r^4}U'_{\theta}-\frac{x_ix^2_3}{r^5}U''_{\theta}.
\]
So 
\begin{equation}\label{eqA_T_1}
      \sum_{i=1}^{2}T_{i3}x_i  =\frac{\rho^2x_3}{r^4}|U_{\theta}'|^2+\frac{x_3^2-\rho^2}{r^3}U_{\theta}U'_{\theta}-\frac{x_3}{r^2}U^2_{\theta}+\frac{2\rho^2x_3}{r^4}U'_{\theta}-\frac{2\rho^2}{r^3}U_{\theta}-\frac{\rho^2(\rho^2-x_3^2)}{r^5}U''_{\theta}.
      \end{equation}
Since $U_{\theta}$ satisfy (\ref{eqS_1}), take derivative of the first equation of (\ref{eqS_1}) both sides with respect to $y$, we have 
\[
   (1-y^2)U''_{\theta}+2U_{\theta}+U_{\theta}U'_{\theta}=-2c_3y.
\]
Plug in $U''_{\theta}=-\frac{1}{1-y^2}(2U_{\theta}+U_{\theta}U'_{\theta}+2c_3y)$ in (\ref{eqA_T_1}), we have 
\begin{equation*}
   \sum_{i=1}^{2}T_{i3}x_i 
                                          =-\frac{2c_3x_3}{r^2}+\left(\frac{\rho^2x_3}{r^4}(|U_{\theta}'|^2+2U'_{\theta}+4c_3)-\frac{x_3}{r^2}U^2_{\theta}-\frac{x_3^2}{r^3}U_{\theta} \right). 
\end{equation*}
Let
\begin{equation}\label{eqA_G}
   G(x)=\frac{\rho^2x_3}{r^4}(|U_{\theta}'|^2+2U'_{\theta}+4c_3)-\frac{x_3}{r^2}U^2_{\theta}-\frac{x_3^2}{r^3}U_{\theta}. 
\end{equation}
Now we have 
\[
   \begin{split}
      L_3^{(1)} & =\frac{1}{\epsilon}\int_{\rho=\epsilon}(T_{13}x_1+T_{23}x_2)\varphi(0,0,x_3)d\sigma\\
            & =-\frac{1}{\epsilon}\int_{\rho=\epsilon}\frac{2c_3x_3}{r^2}\varphi(0,0,x_3)d\sigma+\frac{1}{\epsilon}\int_{\rho=\epsilon}G(x)\varphi(0,0,x_3)d\sigma\\
            & =:A+B.
               \end{split}
\]
Since $\varphi(0,0,R)=\varphi(0,0,-R)=0$, we have
\[
   A  =-4\pi c_3\int_{-R}^{R}\frac{x_3}{\epsilon^2+x_3^2}\varphi(0,0,x_3)dx_3
      =2\pi c_3\int_{-R}^{R}\ln(\epsilon^2+x_3^2)\partial_{x_3}\varphi(0,0,x_3)dx_3. 
\]

So 
\begin{equation}\label{eqA_A}
   \lim_{\epsilon\to 0} A=4\pi c_3\int_{-R}^{R}\ln |x_3|\partial_{x_3}\varphi(0,0,x_3)dx_3.
   \end{equation}

Next, write
\[
  \begin{split}
   B & =\frac{1}{\epsilon}\int_{\rho=\epsilon}G(x)\varphi(0,0,x_3)d\sigma\\
      & =\frac{1}{\epsilon}\int_{\rho=\epsilon}G(x)\varphi(0)d\sigma+\frac{1}{\epsilon}\int_{\rho=\epsilon}G(x)(\varphi(0,0,x_3)-\varphi(0))d\sigma\\
      & =: B_1+B_2.
   \end{split}
\]
We have $|B_2| \le C\int_{-R}^{R}|G(x)x_3|dx_3$
By (\ref{eqA_1}) and (\ref{eqA_G}), we have that for $|x'|=\epsilon$, $-R\le x_3\le R$, 
\[
   |G(x)x_3|\le C\frac{\rho^2x_3^2}{r^4}(|\ln\frac{\rho}{r}|^2+|\ln\frac{\rho}{r}|)\le \frac{\epsilon^2}{\epsilon^2+x_3^2}(|\ln\frac{\epsilon}{\sqrt{\epsilon^2+x_3^2}}|^2+|\ln\frac{\epsilon}{\sqrt{\epsilon^2+x_3^2}}|). 
\]
So $\lim_{\epsilon\to 0}|G(x)x_3|=0$ a.e. $x_3\in [-R, R]$,  and $|G(x)x_3|\le C$ for $-R\le x_3\le R$. By the dominated convergence theorem, we have 
\begin{equation}\label{eqA_B2}
  \lim_{\epsilon\to 0}B_2=0.
  \end{equation}


Next, let 
$
  b_{\epsilon}=\frac{1}{\epsilon}\int_{\rho=\epsilon}G(x)dx.
$ 
We have $B_1=b_{\epsilon}\varphi(0)$.  Let $\delta=R/\sqrt{\epsilon^2+R^2}$, 
 we have $0<\delta<1$. On $\{\rho=\epsilon\}$, $r=\sqrt{\epsilon^2+x_3^2}$, therefore $y=\cos\theta=x_3/r=x_3/\sqrt{\epsilon^2+R^2}$, we have 
\begin{equation}\label{eqA_dy}
   dy=\frac{\epsilon^2}{(\epsilon^2+x_3^2)^{\frac{3}{2}}}dx_3, \textrm{ so }dx_3=\frac{r^3}{\epsilon^2}dy. 
\end{equation}
We also have $\sqrt{1-y^2}=\sin\theta=\epsilon/r$. 
By  (\ref{eqA_1}), (\ref{eqA_G}) and (\ref{eqA_dy}), we have 
\[
  \begin{split}
   b_{\epsilon} 
                       & =\int_{-\delta}^{\delta}\left(\frac{\epsilon^2 y}{r^3}(|U_{\theta}'|^2+2U'_{\theta}+4c_3)-\frac{y}{r}(U^2_{\theta}+y U_{\theta}) \right)\frac{r^3}{\epsilon^2}dy\\
                       & =\int_{-\delta}^{\delta}\left(y(|U_{\theta}'|^2+2U'_{\theta})-\frac{y}{1-y^2}(U^2_{\theta}+y U_{\theta}) \right)dy\\
                       & =2\delta(U_{\theta}(\delta)+U_{\theta}(-\delta))+\int_{-\delta}^{\delta}\left(y|U_{\theta}'|^2-\frac{2-y^2}{1-y^2}U_{\theta}-\frac{y}{1-y^2}U_{\theta}^2 \right)dy. 
   \end{split}
\] 
As $\epsilon\to 0$, $\delta\to 1$, so $\delta(U_{\theta}(\delta)+U_{\theta}(-\delta))\to 2(U_{\theta}(1)+U_{\theta}(-1))=0$. 
By (\ref{eqA_1}), 
%
%
%
%
 \begin{equation*}
   b:=\lim_{\epsilon\to 0}b_{\epsilon}=\int_{-1}^{1}\left(y|U_{\theta}'|^2-\frac{2-y^2}{1-y^2}U_{\theta}-\frac{y}{1-y^2}U_{\theta}^2 \right)dy.
 \end{equation*}
 
 Recall $B_1=b_{\epsilon}\varphi(0)$, we have 
 \begin{equation}\label{eqA_B1}
    \lim_{\epsilon\to 0} B_1=b\varphi(0). 
    \end{equation}
Lemma \ref{lemA_6} follows from  (\ref{eqA_A}), (\ref{eqA_B2}) and (\ref{eqA_B1}). 
\end{proof}
Proposition \ref{prop_force} follows from (\ref{eqF}), (\ref{eqA_5}), (\ref{eqA_4}), Lemma \ref{lemA_5} and Lemma \ref{lemA_6}. 
\qed

\section{Proof of Theorem \ref{thmH}}\label{sec_H}

\noindent\emph{Proof of Theorem \ref{thmH}}: 
For convenience, let $C$ denote a constant depending only on $p,\alpha, \beta, \alpha'$ and $n$, which may vary from line to line. We first prove that if (\ref{eqH_1_1}) holds for some $C$, then $\alpha'\le \alpha$. 


Let $0<\delta<1$, $f_{\delta}(x')$ be a smooth function of $x'$, such that
\[
   f_{\delta}(x'):=\left\{
       \begin{split}
           & 1, \quad 2\delta\le |x'|\le 3\delta, \\
           & 0, \quad |x'|\le \delta \textrm{  or }|x'|\ge 4\delta,
       \end{split}
   \right.
\]
and $|\nabla_{x'} f|\le C/\delta$.  
Let $g(x_n)$ be a smooth function such that
\[
   g(x_n):=\left\{
       \begin{split}
           & 1, \quad 2\le |x_n|\le 3, \\
           & 0, \quad |x_n|\le 1 \textrm{  or }|x_n|\ge 4,
       \end{split}
   \right.
\]
and $|g'(x_n)|\le C$. Define $u_{\delta}(x):=f_{\delta}(x')g(x_n)$, then $u_{\delta}$ is in $C_0^1(\mathbb{R}^n)$. By computation, 
\begin{equation*}
      \||x|^{\beta}|x'|^{\alpha} u_{\delta}\|^p_{L^p(\mathbb{R}^n)} \ge \int_{2}^{3}\int_{2\delta \le |x'|\le 3\delta}|x|^{\beta p}|x'|^{\alpha p}dx'dx_n \ge \delta^{\alpha p+n-1}/C.
\end{equation*}
On the other hand, since $\delta\le 1$, 
\begin{equation*}
   \begin{split}
  &  \| |x|^{\beta+\alpha-\alpha'}|x'|^{\alpha'+1}\nabla u_{\delta}\|^p_{L^p(\mathbb{R}^n)}\\
   & \le \int_{1}^{4}\int_{\delta \le |x'|\le 4\delta}|x|^{(\beta+\alpha-\alpha') p}|x'|^{(\alpha'+1) p}(|\nabla_{x'}f_{\delta}(x')|^p|g(x_n)|^p
    +|f_{\delta}(x')|^p|g'(x_n)|^p)dx'dx_n\\
                                                                                                                           & \le  C\int_{\delta \le |x'|\le 4\delta} |x'|^{(\alpha'+1) p}(|\nabla_{x'}f_{\delta}(x')|^p+|f_{\delta}(x')|^p)dx'\\
                                                                                                                           & \le C\int_{\delta \le |x'|\le 4\delta} |x'|^{(\alpha'+1) p}\delta^{-p}dx'
                                                                                                                            \le C\delta^{\alpha' p+n-1}.
   \end{split}
\end{equation*}
Since $u_{\delta}$ satisfies (\ref{eqH_1_1}), we have $\delta^{\alpha p+n-1}\le C\delta^{\alpha' p+n-1} $
for any $0<\delta<1$, therefore $\alpha'\le \alpha$.

Next, we prove (\ref{eqH_1_1}) for $\alpha'\le \alpha$. Since  $|x'|\le |x|$, we only need to prove it for $\alpha'=\alpha$, i.e. 
   \begin{equation}\label{eqH_1_1'}
       \||x|^{\beta}|x'|^{\alpha} u\|_{L^p(\mathbb{R}^n)}\le C\| |x|^{\beta}|x'|^{\alpha+1}\nabla u\|_{L^p(\mathbb{R}^n)}.
   \end{equation} 
   We introduce the spherical coordinates in $\mathbb{R}^n$. Let $r>0$, $\theta_1, ...,\theta_{n-2}\in [0,\pi]$ and $\theta_{n-1}\in [0,2\pi]$. Denote 
\[
  \begin{split}
     & x_1=r\sin\theta_1\sin\theta_2\cdots \sin\theta_{n-2}\sin\theta_{n-1},\\
     & x_2=r\sin\theta_1\sin\theta_2\cdots \sin\theta_{n-2}\cos\theta_{n-1},\\
     & x_3=r\sin\theta_1\sin\theta_2\cdots \sin\theta_{n-3}\cos\theta_{n-2},\\
     & \cdots\\
     & x_{n-1}=r\sin\theta_1\cos\theta_2,\\
     & x_n=r\cos\theta_1.
  \end{split}
\]
Then $|x'|=r\sin\theta_1$ and 
$
   dx=r^{n-1}\sin^{n-2}\theta_1\sin^{n-3}\theta_2\cdots\sin\theta_{n-2}drd\theta_1\cdots d\theta_{n-1}.
$
Let $\omega=(\theta_1,...,\theta_{n-1})$, $\omega'=(\theta_2,...,\theta_{n-1})$, $\Omega=\sin^{n-2}\theta_1\cdots\sin\theta_{n-2}$, and $\Omega'=\sin^{n-3}\theta_2\cdots\sin\theta_{n-2}$, $E=\{\omega'\mid 0\le \theta_i\le \pi, 2\le i\le n-2, 0\le \theta_{n-1}\le 2\pi\}$.  Denote $d\omega=d\theta_1\cdots d\theta_{n-1}$ and $d\omega'=d\theta_2\cdots d\theta_{n-1}$. 
We can express 
\[
   \int_{\mathbb{R}^n}(|x|^{\beta}|x'|^{\alpha}|u|)^pdx=\int_{\mathbb{R}^n}r^{(\alpha+\beta)p+n-1}|\sin\theta_1|^{\alpha p+n-2}|u|^p\Omega'drd\omega
\] 
By assumption, $\lambda:=(\alpha+\beta)p+n>0$. For each fixed $\omega\in [0,\pi]^{n-2}\times[0,2\pi]$,  let $\hat{u}(s):=u(s^{1/\lambda}, \omega)$, it is well-known that 
\[
   \int_{0}^{\infty}|\hat{u}(s)|^{p}ds\le C(p)\int_{0}^{\infty}|\hat{u}'(s)|^{p}s^pds.
\]
Namely,
%
\[
    \int_{0}^{\infty}|u(r,\omega)|^p r^{(\alpha+\beta)p+n-1}dr\le C\int_{0}^{\infty}|\partial_r u|^pr^{(\alpha+\beta+1)p+n-1}dr, \quad \forall \omega\in [0,\pi]^{n-2}\times[0,2\pi].
\]
Let $0<\epsilon<\pi/4$ be fixed, the constant $C$ also depends on $\epsilon$. By the above we have 
\begin{equation}\label{eqH_1_2}
    \int_{\epsilon}^{\pi-\epsilon}\int_E\int_{0}^{\infty}|u|^p r^{(\alpha+\beta)p+n-1}\Omega' drd\omega' d\theta_1\le C\int_{\epsilon}^{\pi-\epsilon} \int_E\int_{0}^{\infty}|\nabla u|^p r^{(\alpha+\beta+1)p+n-1}\Omega' dr d\omega'd\theta_1. 
\end{equation}
Similarly, we have 
\[
   \int_{\epsilon}^{2\epsilon}\int_E\int_{0}^{\infty}|u|^p r^{(\alpha+\beta)p+n-1}\Omega' dr d\omega'd\theta_1\le C  \int_{\epsilon}^{2\epsilon}\int_E\int_{0}^{\infty}|\nabla u|^p r^{(\alpha+\beta+1)p+n-1}\Omega' dr d\omega'd\theta_1.
\]
So there exists some $\bar{\theta}_1\in [\epsilon, 2\epsilon]$, such that 
\begin{equation}\label{eqH_3_4}
     \int_E\int_{0}^{\infty}|u(r,\bar{\theta}_1, \omega')|^pr^{(\alpha+\beta)p+n-1}\Omega' dr d\omega' \le C \int_{\epsilon}^{2\epsilon}\int_E\int_{0}^{\infty}|\partial_r u|^pr^{(\alpha+\beta+1)p+n-1}\Omega' dr d\omega'd\theta_1. 
\end{equation}
Notice for $\theta_1\in [0, \frac{\pi}{2}]$, $\theta_1\le \sin\theta_1\le 2\theta_1$. By computation, using $\alpha p+n>1$, for every fixed $r$ and $\omega'$, 
\[
   \begin{split}
    &  \int_{0}^{\bar{\theta}_1}|u(r,\theta_1, \omega')|^p|\sin\theta_1|^{\alpha p+n-2}d\theta_1 \\
                                                            & =\int_{0}^{\bar{\theta}_1}|\sin\theta_1|^{\alpha p+n-2}\left(|u(r, \bar{\theta}_1, \omega')|^p-\int_{\theta_1}^{\bar{\theta}_1}\partial_t |u(r,t, \omega')|^{p}dt\right)d\theta_1\\
                                                            & \le C\int_{0}^{\bar{\theta}_1}\theta_1^{\alpha p+n-2}|u(r, \bar{\theta}_1, \omega')|^pd\theta_1+C\int_{0}^{\bar{\theta}_1}|u(r, t, \omega')|^{p-1}|\partial_t u(r,t, \omega'|)\int_{0}^{t}\theta_1^{\alpha p+n-2}d\theta_1 dt\\
                                                            & \le C|u(r, \bar{\theta}_1, \omega')|^p+C\int_{0}^{\bar{\theta}_1}|\sin t|^{\alpha p+n-1} |u(r,t,\omega')|^{p-1}|\partial_{t} u|dt\\
                                                            & \le C |u(r, \bar{\theta}_1, \omega')|^p+\frac{1}{2}\int_{0}^{\bar{\theta}_1}|u|^p |\sin t|^{\alpha p+n-2}dt
                                                             +C\int_{0}^{\bar{\theta}_1}|\sin t|^{(\alpha +1)p+n-2} |\partial_{t} u|^pd\theta_1.
   \end{split}
\]
Thus 
\[
     \int_{0}^{\bar{\theta}_1}|u(r,\theta_1, \omega')|^p|\sin\theta_1|^{\alpha p+n-2}d\theta_1
     \le C|u(r, \bar{\theta}_1, \omega')|^p+C\int_{0}^{\bar{\theta}_1}|\sin\theta_1|^{(\alpha+1)p+n-2} |\partial_{\theta_1} u(r,\theta_1, \omega')|^pd\theta_1.
\]
Multiply both sides of the above by $r^{(\alpha+\beta)p+n-1}\Omega'$, and take integral with respect to $r$ and $\omega'$. By (\ref{eqH_3_4}), we have 
\begin{equation}\label{eqH_3_5}
   \begin{split}
    & \int_{0}^{\epsilon}\int_E\int_{0}^{\infty}|u|^pr^{(\alpha+\beta)p+n-1}|\sin\theta_1|^{\alpha p+n-2}\Omega' dr d\omega'd\theta_1 \\
    &   \le \int_{0}^{\bar{\theta}_1} \int_E\int_{0}^{\infty}|u|^pr^{(\alpha+\beta)p+n-1}|\sin\theta_1|^{\alpha p+n-2}\Omega' dr d\omega'd\theta_1\\
                                                                              & \le C \int_E\int_{0}^{\infty}|u(r,\bar{\theta}_1,\omega')|^pr^{(\alpha+\beta)p+n-1}\Omega' dr d\omega'\\
                                                                              & +C \int_{0}^{\bar{\theta}_1}\int_E\int_{0}^{\infty}r^{(\alpha+\beta)p+n-1}|\sin\theta_1|^{(\alpha+1)p+n-2} |\partial_{\theta_1} u|^p\Omega' dr d\omega'd\theta_1\\
                                                                              & \le C \int_{\epsilon}^{2\epsilon}\int_E\int_{0}^{\infty}|\partial_r u|^pr^{(\alpha+\beta+1)p+n-1}\Omega' dr d\omega'd\theta_1\\
                                                                               &+C \int_{0}^{\bar{\theta}_1}\int_E\int_{0}^{\infty}r^{(\alpha+\beta)p+n-1}|\sin\theta_1|^{(\alpha+1)p+n-2} |\partial_{\theta_1} u|^p\Omega' dr d\omega'd\theta_1\\
                                                                              & \le C \int_{0}^{2\epsilon}\int_E\int_{0}^{\infty}|\nabla u|^pr^{(\alpha+\beta+1)p+n-1}|\sin\theta_1|^{(\alpha+1)p+n-2}\Omega' dr d\omega'd\theta_1
                                                                               \end{split}
\end{equation}

Similarly, we have 
\begin{equation}\label{eqH_3_6}
\begin{split}
 &  \int_{\pi-\epsilon}^{\pi} \int_E\int_{0}^{\infty} |u|^pr^{(\alpha+\beta)p+n-1}|\sin\theta_1|^{\alpha p+n-2}\Omega' dr d\omega'd\theta_1\\
 & \le C\int_{\pi-2\epsilon}^{\pi}\int_E\int_{0}^{\infty}|\nabla u|^pr^{(\alpha+\beta+1)p+n-1}|\sin\theta_1|^{(\alpha+1)p+n-2}\Omega' dr d\omega'd\theta_1
  \end{split}
\end{equation}

By (\ref{eqH_1_2}), (\ref{eqH_3_5}) and (\ref{eqH_3_6}), we have 
\[
  \begin{split}
   &  \int_{\mathbb{R}^n}|u|^pr^{(\alpha+\beta)p+n-1}|\sin\theta_1|^{\alpha p+n-2}\Omega' dr d\omega \\
   & \le C\int_{\mathbb{R}^n}|\nabla u|^pr^{(\alpha+\beta+1)p+n-1}|\sin\theta_1|^{(\alpha+1)p+n-2}\Omega' dr d\omega,
   \end{split}
\]
which is equivalent to (\ref{eqH_1_1'}). The theorem is proved.
\qed

\begin{cor}\label{corH_1}
 Let $K$ be a compact subset of $M$, $(c,\gamma)\in K$. Then there exist some positive constant $C$, depending only on $K$, such that for any $w\in \dot{H}^1(\mathbb{R}^3)$, 
  \begin{equation*}
     \int_{\mathbb{R}^3}|w|^2|\nabla u^{c,\gamma}| dx+\int_{\mathbb{R}^3}|w|^2|u^{c,\gamma}|^2dx \le C(|c|+|\gamma|)\|\nabla w\|_{L^2}^2.
  \end{equation*}
\end{cor}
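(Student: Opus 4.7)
The plan is to reduce both integrals to weighted $L^2$ estimates of $|w|$ against powers of $|x|$ and $|x'|$, and then apply the extended Hardy inequality Theorem \ref{thmH}. All pointwise input is provided by Corollary \ref{corS_2}; the role of Theorem \ref{thmH} is to convert those weights into $\|\nabla w\|_{L^2}^2$.

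For the gradient term, I would first observe that the logarithmic factor in the pointwise estimate (\ref{eqS_2_0_3}) can be absorbed into the leading singularity. Indeed, the elementary inequality $t|\ln t|\le 1/e$ for $t\in(0,1]$ applied with $t=|x'|/|x|$ gives
\[
\frac{1}{|x|^2}\ln\frac{|x|}{|x'|}=\frac{1}{|x||x'|}\cdot\frac{|x'|}{|x|}\ln\frac{|x|}{|x'|}\le \frac{C}{|x||x'|},
\]
so Corollary \ref{corS_2} yields the clean bound $|\nabla u^{c,\gamma}(x)|\le C(|c|+|\gamma|)/(|x||x'|)$ on $K$. It then suffices to prove
\[
\int_{\mathbb{R}^3}\frac{|w|^2}{|x||x'|}\,dx\le C\|\nabla w\|_{L^2}^2.
\]
This is exactly Theorem \ref{thmH} with $n=3$, $p=2$, $\beta=\alpha=\alpha'=-\tfrac12$: the conditions $\alpha p=-1>1-n=-2$ and $(\alpha+\beta)p=-2>-n=-3$ hold, and the right-hand side becomes $\||x|^{-1/2}|x'|^{1/2}\nabla w\|_{L^2}$, which is majorized by $\|\nabla w\|_{L^2}$ since $|x'|\le|x|$.

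For the term $\int|w|^2|u^{c,\gamma}|^2\,dx$, Corollary \ref{corS_2} gives $|u^{c,\gamma}|\le C(|c|+|\gamma|)|x|^{-1}\ln(|x|/|x'|)$ after using $|x'|/|x|\le 1$ in (\ref{eqS_2_0_1}). The logarithm is now squared, so the cancellation trick above is unavailable; instead I would use $\ln^2(|x|/|x'|)\le C_\varepsilon(|x|/|x'|)^{2\varepsilon}$ for any small $\varepsilon>0$, reducing the estimate to
\[
\int_{\mathbb{R}^3}\frac{|w|^2}{|x|^{2-2\varepsilon}|x'|^{2\varepsilon}}\,dx\le C\|\nabla w\|_{L^2}^2.
\]
I would apply Theorem \ref{thmH} with $\beta=-1+\varepsilon$, $\alpha=-\varepsilon$, $\alpha'=\alpha$; the hypotheses $\alpha p=-2\varepsilon>-2$ and $(\alpha+\beta)p=-2>-3$ are satisfied for $\varepsilon\in(0,1)$. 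The right-hand side reduces to $\||x|^{-1+\varepsilon}|x'|^{1-\varepsilon}\nabla w\|_{L^2}$, again bounded by $\|\nabla w\|_{L^2}$ because $(|x'|/|x|)^{1-\varepsilon}\le 1$. This contributes a factor $C(|c|+|\gamma|)^2$, which on the compact set $K\subset M$ is absorbed into $C(|c|+|\gamma|)$.

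The only real subtlety is the choice of exponents in Theorem \ref{thmH}: both integrals sit exactly at the borderline where the RHS weight $|x|^{\beta+\alpha-\alpha'}|x'|^{\alpha'+1}$ must be bounded by a constant, and this forces the choice $\alpha'=\alpha$ together with the verification $|x|^\beta|x'|^{\alpha+1}\le 1$. The logarithmic blow-up of $|\nabla u^{c,\gamma}|$ and $|u^{c,\gamma}|$ along the singular ray $\{x'=0\}$ is the main analytic obstacle, and it is overcome in two different ways: by the pointwise absorption $t|\ln t|\lesssim 1$ for the gradient term, and by the harmless $\varepsilon$-perturbation of exponents for the quadratic term. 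Adding the two resulting estimates yields the corollary.
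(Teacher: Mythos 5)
Your proof is correct and follows essentially the same route as the paper: pointwise bounds from Corollary \ref{corS_2} with the logarithm traded for a small power of $|x'|/|x|$, followed by Theorem \ref{thmH} with $\alpha'=\alpha$. The only difference is bookkeeping: the paper absorbs the logarithm in $|u^{c,\gamma}|$ directly into $C(|c|+|\gamma|)/\sqrt{|x||x'|}$ (your $\varepsilon$-trick with $\varepsilon=1/4$, in effect), so both terms reduce to the single integral $\int |w|^2/(|x||x'|)\,dx$ and one application of Theorem \ref{thmH} with $\alpha=\beta=-\tfrac12$ suffices, whereas you invoke the theorem twice with different exponents.
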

\begin{proof}
By Corollary \ref{corS_2}, we have 
   \[
     |u^{c,\gamma}|\le \frac{C(|c|+|\gamma|)}{\sqrt{|x||x'|}}, \quad  |\nabla u^{c,\gamma}|\le \frac{C(|c|+|\gamma|)}{|x||x'|}.
   \]
   By Theorem \ref{thmH} with $\alpha=\beta=-\frac{1}{2}$, $p=2$ and $n=3$, we have
 \begin{equation*}
   \begin{split}
      & \int_{\mathbb{R}^3}|w|^2|u^{c,\gamma}|^2dx+\int_{\mathbb{R}^3}|w|^2|\nabla u^{c,\gamma}| dx\\
     & \le  C(|c|+|\gamma|)\int_{\mathbb{R}^3}\frac{|w|^2}{|x||x'|}dx 
       \le C(|c|+|\gamma|)\int_{\mathbb{R}^3}|x|^{-1}|x'||\nabla w|^2dx 
        \le C(|c|+|\gamma|)\|\nabla w\|_{L^2}^2. 
      \end{split}
         \end{equation*}
\end{proof}
Notice (\ref{eqEst_1}) follows from Corollary \ref{corH_1}.

\FloatBarrier

\end{document}